\theoremstyle{plain}
\newtheorem{thm}{Theorem}[section]
\newtheorem{cor}[thm]{Corollary}
\newtheorem{df}[thm]{Definition}
\newtheorem{lem}[thm]{Lemma}
\newtheorem{rem}[thm]{Remark}
\newtheorem{ques}[thm]{Question}
\newtheorem{conj}[thm]{Conjecture}
\def\cal{\mathcal}
\def\bbb{\mathbb}
\renewcommand{\phi}{\varphi}
\newcommand{\N}{\bbb{N}}
\newcommand{\Z}{\bbb{Z}}
\newcommand{\bs}{\backslash}
\begin{document}
\title[]{On Frobenius problem with restrictions on common divisors of coefficients}

\author{Piotr Miska}
\address{Institute of
Mathematics, 
Faculty of Mathematics and Computer Science, 
Jagiellonian University in Cracow, 
{\L}ojasiewicza 6, 
30-348 Krak\'ow, Poland; 
email: \texttt{piotr.miska@uj.edu.pl}}

\author{Maciej Zakarczemny}
\address{Cathedral of Applied Mathematics, 
Faculty of Computer Science and Telecomunication, 
Cracow University of Technology, 
Warszawska 24, 
31-155 Krak\'ow, Poland; 
email: \texttt{mzakarczemny@pk.edu.pl}}

\keywords{Frobenius problem, greatest common divisor, partition} \subjclass[2020]{Primary: 11D07, Secondary: 11A05, 11P99}
\thanks{The research of the first author is supported by the grant of the Polish National Science Centre (NCN), Poland, no. UMO-2019/34/E/ST1/00094}

\begin{abstract}
Let $m,s,t$ are positive integers with $t\leq s-2$ and $a_1,a_2,\ldots,a_s$ are positive integers such that $(a_1,a_2,\ldots,a_{s-1})=1$. In the paper we prove that every sufficiently large positive integer can be written in the form $a_1\mu_1+a_2\mu_2+\ldots+a_s\mu_m$, where positive integers $\mu_1,\mu_2,\ldots,\mu_s$ have no common divisor being $m$-th power of a positive integer greater than $1$ but each $t$ of the values of $\mu_1,\mu_2,\ldots,\mu_n$ have a common divisor being $m$-th power of a positive integer greater than $1$. Moreover, we show that every sufficiently large positive integer can be written as a sum of positive integers $\mu_1,\mu_2,\ldots,\mu_n$ with no common divisor being $m$-th power of a positive integer greater than $1$ but each $s-1$ of the values of $\mu_1,\mu_2,\ldots,\mu_s$ have a common divisor being $m$-th power of a positive integer greater than $1$.
\end{abstract}

\maketitle

\section{Introduction}\label{section1}
Let $\Z_{\ge 0}$ denote the set of all non-negative integers and $\N$ or $\Z_{\ge 1}$ denote the set of all positive integers. Let $\mathbb{P}$ denote the set of prime numbers. If the letter $A$ stands for a subsequence of $\mathbb{Z}_{\ge 0}$ and there exists a natural number $h$ such that $\underset{h\text{ times}}{\underbrace{A+A+\ldots+A}}=hA=\mathbb{Z}_{\ge 0},$ then $A$ is called a basis of order $h$, see \cite{HR}. Thus the sequence of all odd numbers with zero is a basis of order $2$. The sequence of nonnegative squares is a basis of order $4$ by Lagrange's four-square theorem, but not a basis of order $3$ by Gauss-Legendre's three-square theorem. It is known (see \cite{HR}) that every sequence which contains zero and has positive Schnirelmann density is a basis. Schnirelmann proved that the sequence consisting of $0,1$ and the primes is a basis. At the time, this result was a contribution to unsolved Goldbach problem. In $1943$ Linnik (see \cite{LIN}) gave an elementary proof of the Waring-Hilbert theorem that, for each natural $k$, the sequence of nonnegative $k$-powers is a basis.

The famous Frobenius coin problem (see \cite{RA,RKG}) can be stated as follows: for given positive relatively prime positive integers $a_1,a_2,\ldots, a_n$ find the largest integer $g(a_1,a_2,\ldots,a_n)$, called the Frobenius number, that is not in the sum of arithmetic sequences $a_1\mathbb{Z}_{\ge 0}+a_2\mathbb{Z}_{\ge 0}+\ldots +a_n\mathbb{Z}_{\ge 0}$. In general, the question is asked about representation of positive integer as a sum of sequences $A_1,A_2,\ldots,A_n\subseteq \mathbb{Z}_{\ge 0}$ of certain types. In our paper, instead of focusing on the form of the summands in the representations of positive integers as sums, we are interested in the relations among these summands. Namely, we consider representation of positive integer as a sum $a_1\mu_1+a_2\mu_2+\ldots+a_s\mu_s$, where $\mu_1,\mu_2,\ldots,\mu_s\in\mathbb{N}$ with extra conditions on common divisors of coefficients $\mu_1,\mu_2,\ldots,\mu_s\in\Z_{\ge 0}$.

For notational simplicity, we denote $[n]:=\{1,2,\ldots,n\}.$
Writing $p_n$ we mean the $n$-th prime number. We use standard notation $(a_1,a_2,\ldots ,a_k)=\mathrm{GCD}(a_1,a_2,\ldots ,a_k).$ If $\phi$ is a logic sentence, then we put
$$\delta(\phi)=\begin{cases}1\mbox{ if }\phi\mbox{ is true},\\0\mbox{ if }\phi\mbox{ is false}.\end{cases}$$

One of the results of \cite{MiMuSa}, namely Proposition 1.5, comes down to checking that $31$ is the least positive integer which can be written as a sum of positive integers $\mu_1,\ldots ,\mu_s$, $s\geq 3$, 
such that $(\mu_1,\ldots ,\mu_s)=1$ but $(\mu_i,\mu_j)>1$ for any $i,j\in [s]$. 
Then, in a~natural way, there emerges a~problem of giving all the positive integers $n$ such that $n=\sum_{i=1}^s\mu_i$ for some $s\geq 3$ and $\mu_1,\ldots ,\mu_s\in\N$ 
such that $(\mu_1,\ldots  ,\mu_s)=1$ and $(\mu_i,\mu_j)>1$ for any $i,j\in [s]$. 
Let us denote the set of all the positive integers $n$ with this property as $\cal{S}$. Fixing an integer $s\geq 3$ we put
$$\cal{S}_s=\left\{\sum_{i=1}^s\mu_i: \mu_1,\ldots  ,\mu_s\in\N, (\mu_1,\ldots  ,\mu_s)=1\mbox{ and }(\mu_i,\mu_j)>1\mbox{ for each }i,j\in [s]\right\}.$$
Then, of course, $\cal{S}=\bigcup_{s=3}^{\infty}\cal{S}_s$.

We can also consider the following sets:
$$\cal{S}_{s,t}=\left\{\sum_{i=1}^s\mu_i: \mu_1,\ldots ,\mu_s\in\N, (\mu_{i_1},\ldots ,\mu_{i_t})>1\mbox{ for any }1\leq i_1<\ldots <i_t\leq s\mbox{ and }(\mu_1,\ldots ,\mu_s)=1\right\},$$
where $s,t\in\N$, $s\geq 3$ and $1\leq t\leq s-1$. We see that $\cal{S}_{s,2}=\cal{S}_s$. \\
Let $a_1,a_2,\ldots,a_s$ be positive integers such that $(a_1,a_2,\ldots,a_s)=1$ and denote by 
$$\cal{S}_s(a_1,a_2,\ldots,a_s)=\left\{\sum_{i=1}^sa_i\mu_i: \mu_1,\ldots  ,\mu_s\in\N, (\mu_1,\ldots  ,\mu_s)=1\mbox{ and }(\mu_i,\mu_j)>1\mbox{ for each }i,j\in [s]\right\}.$$

We give the definition of the generalized greatest $m$-th power common divisor, see \cite{ECoh,KVN}.
\begin{df}
Let $a_1,a_2,\ldots,a_k\in \mathbb{Z},$ not all equal to zero and $m\in\mathbb{N}$. Then $(a_1,a_2,\ldots,a_k)_m$ is the largest divisor of the numbers $a_1,\ldots,a_k$ of the form $d^m$, where $d\in\mathbb{N}$, i.e. $d^m\mid a_i$ for all $i\in [k]$.
 
\end{df}
\begin{rem}
{\rm In the above notation $(a,b)=(a,b)_1.$ If $(a,b)_2=1$ then $(a,b)$ is a squarefree number. 
Note that, $(a,b)_m=1$ if and only if all prime numbers in the decomposition of $(a,b)$ appear with exponents smaller than $m$.}
\end{rem}
By analogy with the notation of $\cal{S}_s$ and $\cal{S}_s(a_1,a_2,\ldots,a_s)$ we define: 
$$\cal{S}^m_s=\left\{\sum_{i=1}^s\mu_i: \mu_1,\ldots  ,\mu_s\in\N, (\mu_1,\ldots  ,\mu_s)_m=1\mbox{ and }(\mu_i,\mu_j)_m>1\mbox{ for each }i,j\in [s]\right\},$$
$$\cal{S}^m_s(a_1,a_2,\ldots,a_s)=\left\{\sum_{i=1}^s a_i\mu_i: \mu_1,\ldots  ,\mu_s\in\N, (\mu_1,\ldots  ,\mu_s)_m=1\mbox{ and }(\mu_i,\mu_j)_m>1\mbox{ for each }i,j\in [s]\right\}$$
and
\begin{align*}
\cal{S}^m_{s,t}(a_1 & ,a_2,\ldots,a_s)=\\
& \left\{\sum_{i=1}^s a_i\mu_i: \mu_1,\ldots  ,\mu_s\in\N, (\mu_1,\ldots  ,\mu_s)_m=1\mbox{ and }(\mu_{i_1},\ldots ,\mu_{i_t})_m>1\mbox{ for any }1\leq i_1<\ldots <i_t\leq s\right\},
\end{align*}
where $s,t\in\N$, $s\geq 3$ and $1\leq t\leq s-1$.
\begin{rem}
Note that
\begin{align*}
\cal{S}^2_{s,1}(a_1 & ,a_2,\ldots,a_s)=\\
& \left\{\sum_{i=1}^s a_i\mu_i: \mu_1,\ldots  ,\mu_s\in\N, \mbox{ where }\mu_{i_1},\ldots ,\mu_{i_t}\mbox{ are not squarefree numbers and }(\mu_1,\ldots  ,\mu_s)\mbox{ is squarefree}\right\}.
\end{align*}
\end{rem}

Our main result is that the set $\cal{S}_{s,t}^m(a_1,a_2,\ldots,a_s)$, $s\ge 3$, $t<s$, contains all the positive integers but finitely many exceptions on condition that $(a_1,a_2,\ldots,a_{s-1})=1$ and $t\le s-2$ or $a_1=\ldots =a_s=1$. In particular, for each positive integers $s>t$ with $s\geq 3$ all but finitely many positive integers can be written as sums of $s$ positive integers with greatest common divisor not divisible by any perfect $m$-th power greater than $1$ but any $t$ of summands have common divisor being a perfect $m$-th power greater than $1$.

Let us describe the content of the paper. In Section 2 we formulate the statements of our results. Section 3 contains lemmas that are subsidiary in the proofs of our main results, given in Sections 4, 5 and 6. In Section 7 we state some open problems and possible directions of further study in the area of Frobenius problem with restrictions.

\section{Main results}

\begin{thm}\label{Ss}
Let $s\ge 3.$ Let $a_k$, where $k\in [s]$, be positive integers such that $(a_1,a_2,\ldots,a_{s-1})=1.$ Let $p_{\{i,j\}}$, where $i,j\in [s],\,\,i\neq j,$ be pairwise 
distinct $\genfrac(){0pt}{2}{s}{2}$ prime numbers such that $p_{\{i,j\}}\nmid (a_i,a_j)$ for each $i,j\in [s-1],\,\,i\neq j,$ and $p_{\{i,s\}}\nmid a_i$ for each $i\in [s-1]$. Then any positive integer
\begin{align*}
n\ge & a_{s-1}\prod\limits_{\{i,j\}\subset [s]\bs\{s-1\},\,i\neq j}p_{\{i,j\}}^m+a_s\prod\limits_{\{i,j\}\subset [s-1],\,i\neq j}p_{\{i,j\}}^m\\
& +\left(\prod_{\{i,j\}\subset [s],\,i\neq j}p_{\{i,j\}}^m\right)\cdot\sum_{k=1}^{s-2}\left(\frac{a_ka_{s-1}}{(a_k,a_{s-1})}\cdot\frac{2}{p_{\{k,s-1\}}^m}+\sum_{l\in [s-2],\,l>k}\frac{a_ka_l}{(a_k,a_l)}\cdot\frac{1}{p_{\{k,l\}}^m}\right)
\end{align*}
belongs to the set $S_{s,s-2}^m(a_1,a_2,\ldots,a_s)$ and thus, to the set $S_{s,t}^m(a_1,a_2,\ldots,a_s)$ for each $t\in [s-2]$.
\end{thm}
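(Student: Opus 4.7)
\emph{Construction.} My plan is to set $\mu_i=P_ic_i$ for positive integers $c_i$ to be determined, where
\[
P_i:=\prod_{\{j,k\}\subset [s]\setminus\{i\},\,j\ne k}p_{\{j,k\}}^m\qquad\text{and}\qquad P:=\prod_{\{i,j\}\subset [s],\,i\ne j}p_{\{i,j\}}^m.
\]
For every pair $\{u,v\}\subset [s]$ the prime power $p_{\{u,v\}}^m$ divides $P_k$, hence $\mu_k$, precisely when $k\notin\{u,v\}$; so the $s-2$ values $\{\mu_k:k\notin\{u,v\}\}$ share the $m$-th power divisor $p_{\{u,v\}}^m>1$. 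Since every $t$-subset with $t\le s-2$ lies inside some $(s-2)$-subset, this single construction delivers the required $t$-wise common $m$-th power divisor condition for all $t\le s-2$ at once.

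\emph{A GCD observation.} Using the pairwise distinctness of the $p_{\{i,j\}}$'s together with the hypotheses $p_{\{i,j\}}\nmid (a_i,a_j)$ for $i,j\in[s-1]$, $p_{\{i,s\}}\nmid a_i$ for $i\in[s-1]$, and $(a_1,\ldots,a_{s-1})=1$, a prime-by-prime check shows $(a_1P_1,\ldots,a_{s-1}P_{s-1})=1$: a candidate prime $p_{\{u,v\}}$ with $u\in[s-1]$ satisfies $p_{\{u,v\}}\nmid P_u$ and $p_{\{u,v\}}\nmid a_u$ (whether $v\in[s-1]$ or $v=s$), while any other prime is ruled out by $(a_1,\ldots,a_{s-1})=1$. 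In particular, by Sylvester--Frobenius every sufficiently large integer is a non-negative integer combination of the $a_kP_k$'s, and the task reduces to engineering a representation with each $c_k\ge 1$ under the precise quantitative threshold claimed.

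\emph{Choosing the $c_i$'s.} Put $c_s=1$, so it remains to solve $n-a_sP_s=a_{s-1}P_{s-1}c_{s-1}+\sum_{k=1}^{s-2}a_kP_kc_k$ with each $c_k\ge 1$. For each $k\in [s-2]$ invoke B\'ezout on $(a_k,a_{s-1})$ to obtain $\alpha_k\in[1,a_{s-1}/(a_k,a_{s-1})]$ and $\gamma_k\in\Z$ with $\alpha_ka_k+\gamma_ka_{s-1}=(a_k,a_{s-1})$; analogously, for every pair $\{k,l\}\subset[s-2]$ extract B\'ezout coefficients witnessing $(a_k,a_l)$ as a combination of $a_k,a_l$. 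Scaled by the cofactors $P/p_{\{k,s-1\}}^m=\operatorname{lcm}(P_k,P_{s-1})$ and $P/p_{\{k,l\}}^m=\operatorname{lcm}(P_k,P_l)$ (each automatically a multiple of the relevant $P_k$'s), these B\'ezout data assemble $c_1,\ldots,c_{s-2}$ in such a way that $n-a_sP_s-\sum_{k\in[s-2]}a_kP_kc_k$ becomes divisible by $a_{s-1}P_{s-1}$; the quotient is the sought $c_{s-1}$, which is at least $1$ whenever $n$ exceeds the stated lower bound. The factor $2$ attached to pairs $\{k,s-1\}$ in that bound appears because a B\'ezout adjustment on $(a_k,a_{s-1})$ shifts both the $c_k$- and the $c_{s-1}$-side by comparable amounts, whereas pairs $\{k,l\}\subset[s-2]$ need only a one-sided correction.

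\emph{Verification and main obstacle.} The condition $(\mu_1,\ldots,\mu_s)_m=1$ follows by checking, for each pair $\{u,v\}\subset[s-1]$, that $c_u\pmod{p_{\{u,v\}}^m}\ne 0$: within the assembled $c_u$, every cofactor other than $P/p_{\{u,v\}}^m$ is divisible by $p_{\{u,v\}}^m$, so the residue is controlled by a single B\'ezout term with cofactor coprime to $p_{\{u,v\}}$, which does not vanish modulo $p_{\{u,v\}}^m$; the analogous statement for $c_v$ then precludes $p_{\{u,v\}}^m$ from dividing both. The chief technical obstacle lies in the previous paragraph: one has to perform the exact bookkeeping of how the B\'ezout-built $c_k$'s accumulate across all pairs $\{k,l\}\subset[s-1]$ and verify that the total $\sum_{k=1}^{s-2}a_kP_kc_k$ fits under the precise combinatorial expression in the theorem's lower bound, with the correct factor $2$ for pairs meeting $s-1$ and factor $1$ for pairs inside $[s-2]$.
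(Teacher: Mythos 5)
Your construction ($\mu_k=P_kc_k$ for $k<s$, $\mu_s=P_s$), the observation that this automatically gives the $(s-2)$-wise common $m$-th power divisors, and the verification that $(a_1P_1,\ldots,a_{s-1}P_{s-1})=1$ all match the paper's proof. The gap is in the middle: the step where you actually produce $c_1,\ldots,c_{s-1}$ satisfying $p_{\{u,v\}}^m\nmid(c_u,c_v)$ within the stated bound is not carried out, and the B\'ezout-based assembly you sketch would not work as described. Two concrete problems: (a) nothing guarantees that a B\'ezout coefficient $\alpha_k$ with $\alpha_ka_k+\gamma_ka_{s-1}=(a_k,a_{s-1})$ is nonzero modulo $p_{\{k,s-1\}}^m$, so the claim that the ``single B\'ezout term\ldots does not vanish modulo $p_{\{u,v\}}^m$'' is unjustified (and false in general); (b) the residue of the assembled $c_u$ modulo $p_{\{u,v\}}^m$ also receives a contribution from whatever base representation solves the remaining equation, and a fixed, unconditional choice of correction terms cannot force all $\binom{s-1}{2}$ residue conditions simultaneously. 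You essentially concede this when you call it ``the chief technical obstacle.''

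The paper resolves exactly this point with a conditional correction scheme. It first uses its Lemma 3.1(b) (a Frobenius-type bound for representations with strictly positive coefficients, applied to the coprime numbers $b_k=a_kP_k$, $k\in[s-1]$) to write $n-a_sP_s=\sum_{k\in[s-1]}a_kP_kx_k^{(1)}$ with each $x_k^{(1)}$ exceeding an explicit slack. Then, for each pair $\{l,k\}\subset[s-1]$, it sets $\epsilon_{l,k}=\delta\bigl(p_{\{l,k\}}^m\mid(x_l^{(1)},x_k^{(1)})_m\bigr)$ and replaces $x_k^{(1)}$ by $x_k^{(1)}\pm\epsilon_{l,k}\frac{a_l}{(a_k,a_l)}\prod_{j\in[s]\setminus\{k,l\}}p_{\{j,k\}}^m$ (sign depending on whether $l<k$ or $l>k$). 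The signs make the total sum invariant; the correction for the pair $\{l,k\}$ is a unit times a product omitting exactly $p_{\{l,k\}}$ on at least one of the two sides (here the hypothesis $p_{\{l,k\}}\nmid(a_l,a_k)$ is used), so it destroys the offending divisibility when and only when it occurs; and corrections for different pairs do not interfere because the term added to $x_k$ for the pair $\{l',k\}$ is divisible by $p_{\{l,k\}}^m$ whenever $l'\neq l$. The factor $2$ on pairs meeting $s-1$ and the factor $1$ on pairs inside $[s-2]$ then drop out of the bookkeeping of the required slack plus the Frobenius threshold. Without some mechanism of this conditional type (or a pigeonhole over several candidate corrections), your argument cannot be completed as written.
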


\begin{cor}\label{NNN1}
Let $a_1,a_2,a_3,a_4$ be positive integers such that $(a_1,a_2,a_3)=1.$
Let $p_{1,2}$, $p_{1,3}$, $p_{1,4}$, $p_{2,3}$, $p_{2,4}$, $p_{3,4}$ be pairwise distinct prime numbers such that:
\begin{align*}
&p_{1,4}\nmid a_1,\,\,p_{2,4}\nmid a_2,\,\,p_{3,4}\nmid a_3,\\
&p_{1,2}\nmid (a_1,a_2),\,\,p_{1,3}\nmid (a_1,a_3),\,\,p_{2,3}\nmid (a_2,a_3).
\end{align*}
Then any positive integer 
\begin{align*}
n&\ge a_3 p_{1,2}^m p_{1,4}^m p_{2,4}^m + a_4 p_{1,2}^m p_{1,3}^m p_{2,3}^m + \tfrac{a_1a_2}{(a_1,a_2)} p_{1,3}^m p_{1,4}^m p_{2,3}^m p_{2,4}^m p_{3,4}^m\\
& + 2 \tfrac{a_1 a_3}{(a_1,a_3)} p_{1,2}^m p_{1,4}^m p_{2,3}^m p_{2,4}^m p_{3,4}^m + 2 \tfrac{a_2 a_3}{(a_2,a_3)} p_{1,2}^m p_{1,3}^m p_{1,4}^m p_{2,4}^m p_{3,4}^m
\end{align*}
belongs to the set $\cal{S}^m_4(a_1,a_2,a_3,a_4).$ In other words, $n=a_1\mu_1+a_2\mu_2+a_3\mu_3+a_4\mu_4,$ where $\mu_1,\mu_2,\mu_3,\mu_4\in\N$ with additional restrictions $(\mu_1,\mu_2,\mu_3,\mu_4)_m=1$ and $(\mu_i,\mu_j)_m>1$ for each $i,j\in\{1,2,3,4\}.$
\end{cor}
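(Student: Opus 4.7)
The plan is to derive Corollary~\ref{NNN1} as a direct specialization of Theorem~\ref{Ss} to the case $s=4$, $t=s-2=2$. First I would observe that the defining condition of $\mathcal{S}_{s,t}^m(a_1,\ldots,a_s)$ with $t=2$ demands $(\mu_i,\mu_j)_m>1$ for every pair $\{i,j\}\subset[s]$, which is exactly the pairwise condition in the definition of $\mathcal{S}_s^m(a_1,\ldots,a_s)$; hence $\mathcal{S}_{4,2}^m(a_1,a_2,a_3,a_4)=\mathcal{S}_4^m(a_1,a_2,a_3,a_4)$. So it suffices to check that the hypotheses and the numerical lower bound of Theorem~\ref{Ss}, taken at $s=4$, are precisely those of the corollary.

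On the side of hypotheses this is immediate: $(a_1,a_2,a_3)=1$ is $(a_1,\ldots,a_{s-1})=1$ for $s=4$; the six pairwise distinct primes $p_{i,j}$ play the role of the $p_{\{i,j\}}$ indexed by the two-element subsets of $[4]$; and the non-divisibility requirements $p_{i,4}\nmid a_i$ for $i\in[3]$ and $p_{i,j}\nmid(a_i,a_j)$ for $\{i,j\}\subset[3]$ correspond respectively to $p_{\{i,s\}}\nmid a_i$ for $i\in[s-1]$ and $p_{\{i,j\}}\nmid(a_i,a_j)$ for $\{i,j\}\subset[s-1]$.

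The only real task is to unpack the lower bound for $s=4$. With $s-1=3$, the first two summands of the theorem become $a_3\prod_{\{i,j\}\subset\{1,2,4\}}p_{i,j}^m=a_3 p_{1,2}^m p_{1,4}^m p_{2,4}^m$ and $a_4\prod_{\{i,j\}\subset\{1,2,3\}}p_{i,j}^m=a_4 p_{1,2}^m p_{1,3}^m p_{2,3}^m$. For the remaining expression one has $[s-2]=\{1,2\}$, so the outer summation index $k$ runs over $\{1,2\}$; the ``$s-1$ contribution'' produces the two terms $\tfrac{2 a_k a_3}{(a_k,a_3)}\cdot p_{k,3}^{-m}$ for $k=1,2$, while the inner sum $\sum_{l\in\{1,2\},\,l>k}$ survives only for $(k,l)=(1,2)$ and contributes $\tfrac{a_1 a_2}{(a_1,a_2)}\cdot p_{1,2}^{-m}$. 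Multiplying each of these three terms by the full product $\prod_{\{i,j\}\subset[4]}p_{i,j}^m=p_{1,2}^m p_{1,3}^m p_{1,4}^m p_{2,3}^m p_{2,4}^m p_{3,4}^m$ cancels the corresponding prime power and yields exactly the three remaining summands in the corollary's bound.

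I do not anticipate any genuine obstacle; the argument is a purely mechanical bookkeeping check once Theorem~\ref{Ss} is in hand, and the claim that $n\in\mathcal{S}_4^m(a_1,a_2,a_3,a_4)$ together with the explicit representation $n=a_1\mu_1+a_2\mu_2+a_3\mu_3+a_4\mu_4$ then follows directly from the theorem's conclusion.
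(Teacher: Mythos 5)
Your proposal is correct and matches the paper's treatment: the paper states Corollary~\ref{NNN1} as an immediate specialization of Theorem~\ref{Ss} to $s=4$, $t=s-2=2$ (using $\mathcal{S}^m_{4,2}=\mathcal{S}^m_4$), and your unpacking of the hypotheses and of the lower bound is the intended, and correct, bookkeeping.
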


We get immediately from Corollary \ref{NNN1}.

\begin{cor}\label{S4prim}
Let $p_{1,2}$, $p_{1,3}$, $p_{1,4}$, $p_{2,3}$, $p_{2,4}$, $p_{3,4}$ be pairwise distinct prime numbers. Then any positive integer 
\begin{align*}
n&\ge p_{1,2}^m p_{1,4}^m p_{2,4}^m + p_{1,2}^m p_{1,3}^m p_{2,3}^m + p_{1,3}^m p_{1,4}^m p_{2,3}^m p_{2,4}^m p_{3,4}^m + 2 p_{1,2}^m p_{1,4}^m p_{2,3}^m p_{2,4}^m p_{3,4}^m + 2 p_{1,2}^m p_{1,3}^m p_{1,4}^m p_{2,4}^m p_{3,4}^m
\end{align*}
belongs to the set $\cal{S}^m_4.$ i.e. $n=\mu_1+\mu_2+\mu_3+\mu_4,$ where $\mu_1,\mu_2,\mu_3,\mu_4,$ have no common divisor which is larger than 
$1$ and is a perfect $m$-th power of an integer but for each $i,j\in\{1,2,3,4\}$ the numbers $\mu_i,\mu_j$ have a common divisor being a perfect $m$-th power larger than $1$.
\end{cor}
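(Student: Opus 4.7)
The plan is to derive this statement as an immediate specialization of Corollary \ref{NNN1} with $a_1=a_2=a_3=a_4=1$; essentially no new work is needed beyond matching the notation, which is exactly why the paper writes ``We get immediately from Corollary \ref{NNN1}''.

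First I would check that the hypotheses of Corollary \ref{NNN1} are satisfied with this choice. The equality $(a_1,a_2,a_3)=(1,1,1)=1$ holds automatically, and each of the divisibility constraints $p_{i,4}\nmid a_i$ for $i\in\{1,2,3\}$ and $p_{i,j}\nmid(a_i,a_j)$ for distinct $i,j\in\{1,2,3\}$ collapses to the trivially true statement $p_{i,j}\nmid 1$. Hence Corollary \ref{NNN1} applies for any six pairwise distinct primes $p_{1,2},p_{1,3},p_{1,4},p_{2,3},p_{2,4},p_{3,4}$, with no constraint on the primes beyond pairwise distinctness.

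Second, I would simplify the explicit bound. Since $\frac{a_ia_j}{(a_i,a_j)}=\frac{1\cdot 1}{1}=1$ for all $i,j\in[4]$, every one of the five terms appearing in the lower bound of Corollary \ref{NNN1} reduces term-by-term to the corresponding product of prime powers, so the hypothesis on $n$ in Corollary \ref{NNN1} becomes verbatim the hypothesis on $n$ stated here.

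Finally, directly from the definitions one has $\cal{S}^m_4(1,1,1,1)=\cal{S}^m_4$: when all the $a_i$ equal $1$, the sum $\sum_{i=1}^{4}a_i\mu_i$ is just $\sum_{i=1}^{4}\mu_i$, while the constraints on common divisors of the $\mu_i$ in the two definitions coincide. Hence the $n$ produced by Corollary \ref{NNN1} lies in $\cal{S}^m_4$, which is the claim. I expect no real obstacle: the argument is a purely mechanical substitution, and all the substantive content is already in Theorem \ref{Ss} and the derived Corollary \ref{NNN1}; the only thing worth double-checking is that the five summands in the bound of Corollary \ref{NNN1} simplify exactly to the five listed here when $a_i=1$, which is a one-line verification.
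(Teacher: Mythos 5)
Your proposal is correct and coincides with the paper's own derivation: the paper also obtains this corollary by the immediate specialization $a_1=a_2=a_3=a_4=1$ in Corollary \ref{NNN1}, under which the divisibility hypotheses become vacuous, the five terms of the bound simplify exactly as you describe, and $\cal{S}^m_4(1,1,1,1)=\cal{S}^m_4$.
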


\begin{cor}
Let $a_1,a_2,a_3,a_4$ be positive integers such that $(a_1,a_2,a_3)=1.$
Let $p_{1,2}$, $p_{1,3}$, $p_{1,4}$, $p_{2,3}$, $p_{2,4}$, $p_{3,4}$ be pairwise distinct prime numbers such that:
\begin{align*}
&p_{1,4}\nmid a_1,\,\,p_{2,4}\nmid a_2,\,\,p_{3,4}\nmid a_3,\\
&p_{1,2}\nmid (a_1,a_2),\,\,p_{1,3}\nmid (a_1,a_3),\,\,p_{2,3}\nmid (a_2,a_3).
\end{align*}
Then any positive integer 
\begin{align*}
n&\ge a_3 p_{1,2}^2 p_{1,4}^2 p_{2,4}^2 + a_4 p_{1,2}^2 p_{1,3}^2 p_{2,3}^2 + \tfrac{a_1a_2}{(a_1,a_2)} p_{1,3}^2 p_{1,4}^2 p_{2,3}^2 p_{2,4}^2 p_{3,4}^2\\
& + 2 \tfrac{a_1 a_3}{(a_1,a_3)} p_{1,2}^2 p_{1,4}^2 p_{2,3}^2 p_{2,4}^2 p_{3,4}^2 + 2 \tfrac{a_2 a_3}{(a_2,a_3)} p_{1,2}^2 p_{1,3}^2 p_{1,4}^2 p_{2,4}^2 p_{3,4}^2
\end{align*}
is of the form $n=a_1\mu_1+a_2\mu_2+a_3\mu_3+a_4\mu_4,$ 
where $\mu_1,\mu_2,\mu_3,\mu_4$ are natural numbers such that $(\mu_1,\mu_2,\mu_3,\mu_4)$ is a squarefree natural number, 
but for each $i,j\in\{1,2,3,4\},$ the numbers $(\mu_i,\mu_j)$ are not a squarefree natural numbers.  
\end{cor}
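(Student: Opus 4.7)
The plan is to deduce this corollary as a direct specialization of Corollary \ref{NNN1} to the case $m=2$, and then translate the conclusion from the language of $(\cdot,\cdot)_m$ into the language of squarefreeness via the Remark following the definition of the generalized $m$-th power greatest common divisor.

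First I would verify that the hypotheses of Corollary \ref{NNN1} with $m=2$ are matched verbatim: the assumption $(a_1,a_2,a_3)=1$ is identical, the six primes $p_{1,2},p_{1,3},p_{1,4},p_{2,3},p_{2,4},p_{3,4}$ are required to be pairwise distinct and to satisfy exactly the same non-divisibility conditions, and substituting $m=2$ into the bound of Corollary \ref{NNN1} replaces every occurrence of $p_{i,j}^{m}$ by $p_{i,j}^{2}$ while leaving the remaining factors ($a_i$, the ratios $a_ia_j/(a_i,a_j)$, and the coefficients $1$ and $2$) untouched; this is precisely the lower bound in the present statement. Consequently, Corollary \ref{NNN1} applied with $m=2$ yields $\mu_1,\mu_2,\mu_3,\mu_4\in\mathbb{N}$ with $n=a_1\mu_1+a_2\mu_2+a_3\mu_3+a_4\mu_4$ such that $(\mu_1,\mu_2,\mu_3,\mu_4)_2=1$ and $(\mu_i,\mu_j)_2>1$ for all $i,j\in\{1,2,3,4\}$.

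To finish, I would invoke the observation in the Remark that $(a,b)_m=1$ holds exactly when every prime in the factorization of $(a,b)$ appears with exponent strictly less than $m$. Specializing to $m=2$, this gives the equivalence: $(\mu_1,\mu_2,\mu_3,\mu_4)_2=1$ iff $(\mu_1,\mu_2,\mu_3,\mu_4)$ is squarefree, and symmetrically $(\mu_i,\mu_j)_2>1$ iff some prime appears in $(\mu_i,\mu_j)$ with exponent at least $2$, i.e.\ iff $(\mu_i,\mu_j)$ is not squarefree. Substituting these equivalences into the conclusion from the previous paragraph produces exactly the desired representation of $n$. There is no genuine obstacle in this argument; the whole statement is an unpacking of Corollary \ref{NNN1} at $m=2$, and the only thing to keep an eye on is that the extension of the squarefree/not-squarefree dictionary from pairs of integers to the $4$-tuple $(\mu_1,\mu_2,\mu_3,\mu_4)$ follows from the same Remark applied to $k=4$ instead of $k=2$.
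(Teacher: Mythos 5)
Your proposal is correct and matches the paper's (implicit) derivation: the paper states this corollary without a separate proof, treating it exactly as the specialization of Corollary \ref{NNN1} to $m=2$ combined with the Remark that $(a,b)_2=1$ is equivalent to $(a,b)$ being squarefree. Nothing further is needed.
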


\begin{rem}
{\rm By taking in the corollary above $a_1=a_2=a_3=a_4=1$, $p_{1,2}=5$, $p_{1,3}=3$, $p_{1,4}=13$, $p_{2,3}=2$, $p_{2,4}=11$, $p_{3,4}=7$, we obtain the following statement: 
If $n\ge 44985865$, then $n=\mu_1+\mu_2+\mu_3+\mu_4,$ where $\mu_1,\mu_2,\mu_3,\mu_4$ 
are natural numbers such that $(\mu_1,\mu_2,\mu_3,\mu_4)$ is a squarefree natural number 
but for each $i,j\in\{1,2,3,4\},\, i\neq j,$ the numbers $(\mu_i,\mu_j)$ are not a squarefree natural numbers.}
\end{rem}

\begin{cor}
Let $a_1,a_2,a_3,a_4$ be positive integers such that $(a_1,a_2,a_3)=1.$
Let $p_{1,2}$, $p_{1,3}$, $p_{1,4}$, $p_{2,3}$, $p_{2,4}$, $p_{3,4}$ be pairwise distinct prime numbers such that:
\begin{align*}
&p_{1,4}\nmid a_1,\,\,p_{2,4}\nmid a_2,\,\,p_{3,4}\nmid a_3,\\
&p_{1,2}\nmid (a_1,a_2),\,\,p_{1,3}\nmid (a_1,a_3),\,\,p_{2,3}\nmid (a_2,a_3).
\end{align*}
Then any positive integer 
\begin{align*}
n&\ge a_3 p_{1,2} p_{1,4} p_{2,4} + a_4 p_{1,2} p_{1,3} p_{2,3} + \tfrac{a_1a_2}{(a_1,a_2)} p_{1,3} p_{1,4} p_{2,3} p_{2,4} p_{3,4}\\
& + 2 \tfrac{a_1 a_3}{(a_1,a_3)} p_{1,2} p_{1,4} p_{2,3} p_{2,4} p_{3,4} + 2 \tfrac{a_2 a_3}{(a_2,a_3)} p_{1,2} p_{1,3} p_{1,4} p_{2,4} p_{3,4}
\end{align*}
is of the form $n=a_1\mu_1+a_2\mu_2+a_3\mu_3+a_4\mu_4,$ where $\mu_1,\mu_2,\mu_3,\mu_4\in\N$, 
with additional restrictions\linebreak $(\mu_1,\mu_2,\mu_3,\mu_4)~=~1$ and $(\mu_i,\mu_j)>1$ for each $i,j\in\{1,2,3,4\}.$
\end{cor}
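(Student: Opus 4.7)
The plan is to recognise that this statement is literally the specialization $m=1$ of Corollary \ref{NNN1}, so the proof consists of nothing more than tracking the substitution through the hypotheses, the conclusion, and the bound.

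First I would check that the hypotheses transfer verbatim: the assumption $(a_1,a_2,a_3)=1$ together with the six non-divisibility conditions on the primes $p_{i,j}$ are stated identically in Corollary \ref{NNN1} and carry no dependence on $m$. Next I would match the conclusion. By definition the generalized greatest $m$-th power common divisor $(\cdot)_m$ reduces to the ordinary greatest common divisor when $m=1$, so the conditions $(\mu_1,\mu_2,\mu_3,\mu_4)_m=1$ and $(\mu_i,\mu_j)_m>1$ become $(\mu_1,\mu_2,\mu_3,\mu_4)=1$ and $(\mu_i,\mu_j)>1$ respectively; equivalently, $\cal{S}_4^1(a_1,a_2,a_3,a_4)=\cal{S}_4(a_1,a_2,a_3,a_4)$. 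Finally, plugging $m=1$ into the bound of Corollary \ref{NNN1} replaces every factor $p_{i,j}^m$ by $p_{i,j}$, which produces exactly the bound on $n$ displayed in the present corollary.

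Since every step is a direct substitution, there is no real obstacle: the entire content of the corollary is encoded in the identity $(\cdot)_1=(\cdot)$ applied to the already-established Corollary \ref{NNN1}. In effect, the result is included here to record the classical ($m=1$) version of Corollary \ref{NNN1} explicitly.
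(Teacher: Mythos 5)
Your proposal is correct and matches the paper's (implicit) treatment exactly: the corollary is stated without proof as the $m=1$ specialization of Corollary \ref{NNN1}, using $(\cdot)_1=(\cdot)$ to translate the divisibility conditions and substituting $m=1$ into the bound. Nothing further is needed.
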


\begin{rem}
If we take $a_1=a_2=a_3=a_4=1$, $p_{1,2}=7$, $p_{1,3}=11$, $p_{1,4}=2$, $p_{2,3}=13$, $p_{2,4}=3$, $p_{3,4}=5$ in the above corollary, then we obtain the following statement: If $n_0=15413$, then $n=\mu_1+\mu_2+\mu_3+\mu_4,$ where $\mu_1,\mu_2,\mu_3,\mu_4$ 
are natural numbers such that $(\mu_1,\mu_2,\mu_3,\mu_4)=1$
but for each $i,j\in\{1,2,3,4\},\, i\neq j,$ we have $(\mu_i,\mu_j)>1$.
\end{rem}

\begin{rem}
{\rm In case of $s=3$, $m=1$, $a_1=a_2=a_3=1$ and $p_{\{1,2\}}=5$, $p_{\{1,3\}}=2$, $p_{\{2,3\}}=3$, Theorem \ref{Ss} states that each positive integer $n\ge 19$ can be written as a sum of three integers $>1$ such that their greatest common divisor is $1$. However, it is easy to show that $\cal{S}_{3,1}=\N\bs\{1,2,3,4,5,6\}$ as every $n\geq 7$ can be written as $2+3+(n-5)$, where $n-5>1$.}
\end{rem}

\begin{rem}
{\rm In case of $s=3$, $m=2$, $a_1=a_2=a_3=1$ and $p_{\{1,2\}}=5$, $p_{\{1,3\}}=2$, $p_{\{2,3\}}=3$, Theorem \ref{Ss} states that each positive integer $n\ge 101$ can be written as a sum of three non-squarefree integers such that their greatest common divisor is squarefree.}
\end{rem}

\begin{thm}\label{S3}
If $n\ge 26 177 082$, then $n\in\cal{S}_3$.
\end{thm}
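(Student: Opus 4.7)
The plan is to adapt the Theorem~\ref{Ss} construction to the case $s=3$, $t = s-1 = 2$, which is excluded there by the hypothesis $t \le s-2$. Given $n \ge 26\,177\,082$, I choose three distinct primes $p < q < r$ that do not divide $n$ (for instance the three smallest such) and set $\mu_1 = pq\,\alpha_1$, $\mu_2 = pr\,\alpha_2$, $\mu_3 = qr\,\alpha_3$. Pairwise non-coprimality is then automatic, and $(\mu_1,\mu_2,\mu_3) = 1$ is equivalent to: (a) $r \nmid \alpha_1$, $q \nmid \alpha_2$, $p \nmid \alpha_3$; and (b) no prime outside $\{p,q,r\}$ divides all three of $\alpha_1,\alpha_2,\alpha_3$.

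To solve $n = pq\alpha_1 + pr\alpha_2 + qr\alpha_3$ I apply the Chinese Remainder Theorem: since $\gcd(n,pqr)=1$, the residues of $\alpha_1,\alpha_2,\alpha_3$ modulo $r,q,p$ respectively are uniquely determined and nonzero. Writing these residues as $a_1 \in \{1,\ldots,r-1\}$, $a_2 \in \{1,\ldots,q-1\}$, $a_3 \in \{1,\ldots,p-1\}$ and setting $\alpha_1 = a_1 + r\beta_1$, $\alpha_2 = a_2 + q\beta_2$, $\alpha_3 = a_3 + p\beta_3$ with $\beta_i \ge 0$, the equation reduces to $\beta_1 + \beta_2 + \beta_3 = (n - pq\,a_1 - pr\,a_2 - qr\,a_3)/(pqr)$, which is solvable as soon as $n \ge 3pqr - pq - pr - qr$. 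I then perform a finite case analysis on the possible triples $(p,q,r)$: because $n$ is divisible by every prime strictly smaller than $p$, one has $n \ge \prod_{\ell < p,\,\ell\text{ prime}} \ell$, and this primorial eventually dominates the polynomial $3pqr$, so only finitely many triples are relevant below the threshold, and each is verified directly.

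The main obstacle is enforcing condition (b). When the $a_i$'s share a common prime factor $\ell \notin \{p,q,r\}$, I must exploit the freedom in distributing $\beta := \beta_1 + \beta_2 + \beta_3$ among the three $\beta_i$'s to shift some $\alpha_i$ and break the coincidence, for example by concentrating $\beta$ into a single $\beta_i$ so that a pair $(\alpha_j, \alpha_k)$ becomes coprime. In the worst residue classes this forces $\beta$ to exceed a positive constant depending on $(p,q,r)$; accumulating these refinements across all relevant triples, together with the constraint $n \ge \prod_{\ell < p,\,\ell\text{ prime}} \ell$ within each case, produces the uniform threshold $26\,177\,082$ after case-by-case optimization.
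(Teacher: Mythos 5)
Your strategy is essentially the one the paper follows: take the three smallest primes $p<q<r$ not dividing $n$, write $n=pq\alpha_1+pr\alpha_2+qr\alpha_3$ (your CRT step is the paper's Lemma~\ref{F1} in this special case), and then adjust the $\alpha_i$ to force $(\alpha_1,\alpha_2,\alpha_3)=1$. However, two of your steps do not go through as written. The first is the finiteness of your case analysis: you justify it by $n\ge\prod_{\ell<p}\ell$, but this bound is far too weak, because for fixed small $p$ the primes $q$ and $r$ are \emph{not} bounded. For example $n$ may be odd but divisible by every prime from $3$ up to some huge bound, in which case $p=2$, the product $\prod_{\ell<p}\ell$ is empty, and $q,r$ are arbitrarily large, so the primorial in $p$ never dominates $3pqr$ and infinitely many triples survive below any threshold. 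What is actually needed is that \emph{every} prime less than $r$ other than $p$ and $q$ divides $n$, hence $n\ge\prod_{\ell<r,\ \ell\notin\{p,q\}}\ell$, combined with a Bertrand-type estimate ($2p_i>p_{i+3}$ for $i\ge 5$, the paper's Lemma~\ref{F3}) to show that the largest prime factors of $n$ are comparable to $r$; this is how the paper makes the list of exceptional triples finite and extracts $26\,177\,082=(6\cdot 2\cdot 3\cdot 5\cdot 7+6)\,p_9p_{10}p_{11}$.

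The second gap is that the adjustment enforcing your condition (b) is precisely the technical core of the proof (the paper's Lemma~\ref{F2}), and your sketch does not supply it. Concentrating $\beta$ into one $\beta_i$ can break divisibility by one offending prime $\ell$, but you must eliminate \emph{all} primes dividing $(\alpha_1,\alpha_2,\alpha_3)$ simultaneously, verify that the shift does not introduce new common prime divisors, and bound the size of the shift uniformly so that the remaining $\beta_j$ stay positive (this cost is exactly what enters the final threshold). The paper achieves this by first reducing one variable below $p$, so that only primes smaller than $p$ can divide the gcd, and then performing a single substitution $\alpha_1\mapsto\alpha_1+Cr\prod_{\ell\in J}\ell$, $\alpha_2\mapsto\alpha_2-Cq\prod_{\ell\in J}\ell$, where $J$ collects the small primes that are already harmless and $C$ is chosen from a short explicit list so as to avoid the finitely many primes that could newly divide $\alpha_1$. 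Without an argument of this shape, ``accumulating these refinements across all relevant triples'' is a statement of intent rather than a proof, and the constant $26\,177\,082$ is not actually derived.
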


\begin{rem}
Spurred by some computer calculations, we have
\begin{align*}
\N\bs\cal{S}_3\supseteq\{ & 2730, 2310, 1680, 1470, 1320, 1260, 1140, 1050, 990, 930, 924, 840, 780, 750, 720, 690, 672, 660, 630, 600,\\
& 546, 540, 510, 504, 480, 474, 462, 450, 420, 390, 378, 372, 360, 348, 336, 330, 306, 300, 294, 288, 280, 276,\\
& 270, 264, 258, 252, 246, 240, 234, 228, 220, 216, 210, 204, 200, 198, 192, 186, 180, 174, 170, 168, 165, 162,\\
& 160, 156, 154, 150, 144, 142, 140, 138, 132, 130, 126, 124, 120, 114, 112, 110, 108, 105, 104, 102, 100, 98,\\
& 96, 94, 90, 88, 84, 82, 81, 80, 78, 76, 75, 74, 72, 70, 68, 66, 64, 63, 62, 60, 58, 57, 56, 54, 52, 51, 50, 48, 46,\\
& 45, 44, 42, 40, 39, 38, 36, 35, 34, 33, 32, 30, 29, 28, 27, 26, 25, 24, 23, 22, 21, 20, 19, 18, 17, 16, 15, 14, 13,\\
& 12, 11, 10, 9, 8, 7, 6, 5, 4, 3, 2, 1\}
\end{align*}
\end{rem}

\begin{thm}\label{Sss-1}
For each integers $s,m$ with $s\geq 3$ the set $\cal{S}_{s,s-1}^m$ contains all the positive integers at least equal to $(s+1)\left(\prod_{j=1}^{c_{s,m}-1}p_j\right)\left(\prod_{i=1}^{s}p_{c_{s,m}-i}^{m-1}\right)$, where $c_{s,m}\in\N$ is such that $2p_{j-ms}>p_j$ for each $j\geq c_{s,m}-s+1$ and $p_{c_{s,m}-(m+1)s}>2^{(2m+1)s+1}$.
\end{thm}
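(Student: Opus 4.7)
The plan is to construct, for each $n$ above the threshold, a representation $n=\mu_1+\cdots+\mu_s$ of the form $\mu_j=P_jy_j$ with $P_j=\prod_{i\neq j}q_i^m$, where $q_1,\ldots,q_s$ are $s$ distinct primes chosen near $p_{c_{s,m}}$ and the positive integers $y_j\in\N$ are chosen via the Chinese remainder theorem. This structure is in fact \emph{forced}: for each $i\in[s]$ the $(s-1)$-tuple $(\mu_j)_{j\neq i}$ must share some $m$-th power $q_i^m$ of a prime, and the global condition $(\mu_1,\ldots,\mu_s)_m=1$ then compels these $s$ primes to be pairwise distinct (so $q_i^m\nmid\mu_i$), which forces $P_j\mid\mu_j$ together with $v_{q_j}(y_j)<m$ and the extra requirement that no prime $p\notin\{q_1,\ldots,q_s\}$ divide every $y_j$ to the $m$-th power.

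First I would select $q_1,\ldots,q_s$ from the pool $\{p_{c-1},p_{c-2},\ldots,p_{c-(m+1)s}\}$ (with $c=c_{s,m}$) so that $q_i^m\nmid n$ for each $i$. This is where both hypotheses on $c_{s,m}$ enter: if more than $ms$ of the $(m+1)s$ pool primes $q$ satisfied $q^m\mid n$, the product of their $m$-th powers would force $n$ to grow much faster than the threshold, using the hypothesis $p_{c-(m+1)s}>2^{(2m+1)s+1}$, which rules out the bad case in the relevant range; meanwhile the inequality $2p_{j-ms}>p_j$ keeps the chosen $q_i$'s within a factor of $2$ of one another, so the $P_j$'s are of uniform order of magnitude.

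Next, the equation $n=\sum_jP_jy_j$ reduces modulo $q_i^m$ to $P_iy_i\equiv n\pmod{q_i^m}$ (the other terms vanish since $q_i^m\mid P_j$ for $j\neq i$), uniquely determining $y_i$ modulo $q_i^m$; since $q_i^m\nmid n$, this residue is nonzero, so any representative $y_i$ automatically satisfies $v_{q_i}(y_i)<m$. To handle the ancillary coprimality for primes $p\notin\{q_1,\ldots,q_s\}$, I would use the freedom to shift each $y_j$ by multiples of $q_j^m$ and, by CRT modulo the primorial $R=\prod_{j=1}^{c-s-1}p_j$, arrange that for every prime $p\leq p_{c-s-1}$ at least one of the $y_j$'s is coprime to $p$; this lets one take each $y_j$ in the box $[1,Rq_j^m]$. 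Demanding positivity of the last coordinate forced by the linear equation, combined with the bound $\sum_jP_jy_j\leq sR\prod_kq_k^m=sRS$ where $S=\prod_ip_{c-i}^m$, then yields exactly the threshold $(s+1)RS=(s+1)\prod_{j=1}^{c-1}p_j\prod_{i=1}^sp_{c-i}^{m-1}$.

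The main obstacle I anticipate is the prime-selection step: one must verify that the bounds defining $c_{s,m}$ are genuinely strong enough to guarantee $s$ good primes in the pool for \emph{every} $n$ above the threshold, which requires a careful accounting of $v_q(n)$ over the pool and some care when many of those primes happen to divide $n$ to the $m$-th power. A secondary technical point is orchestrating the simultaneous CRT conditions (modulo each $q_i^m$ and modulo the primorial $R$) within the boxes $[1,Rq_j^m]$ per coordinate, which is precisely what produces the clean threshold $(s+1)RS$ rather than something larger.
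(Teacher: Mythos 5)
Your ansatz $\mu_j=y_j\prod_{i\neq j}q_i^m$ and the congruence analysis are reasonable in outline, but the prime-selection step contains a genuine gap that breaks the argument. You draw $q_1,\ldots,q_s$ from the \emph{fixed} pool $\{p_{c-1},\ldots,p_{c-(m+1)s}\}$ (with $c=c_{s,m}$) and assert that the bad case --- more than $ms$ pool primes $q$ with $q^m\mid n$ --- is ``ruled out'' because it would force $n$ far above the threshold. But the theorem must cover \emph{all} $n$ above the threshold, and being large is no contradiction: any sufficiently large $n$ divisible by $\bigl(\prod_{j=1}^{c}p_j\bigr)^{m}$ lies above the threshold, yet every pool prime divides it to the $m$-th power, so your pool contains no admissible prime at all. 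A window of primes fixed independently of $n$ cannot work; the primes must be chosen adaptively. The paper does exactly this: it takes $p_{j_1},\ldots,p_{j_s}$ to be the $s$ \emph{smallest} primes not dividing $n$ (these always exist since $n$ has finitely many prime factors), and then splits into two cases according to the size of $j_1=\max_i j_i$. If $j_1\geq c_{s,m}$, then $n$ is divisible by the product of all the skipped smaller primes together with its $ms+1$ largest prime factors, and the two hypotheses on $c_{s,m}$ convert this into a lower bound on $n$ strong enough to invoke condition (1) of Lemma \ref{F4}; if $j_1<c_{s,m}$, the stated threshold itself dominates the bound required by condition (2) of Lemma \ref{F4}. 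This dichotomy on the location of the first $s$ primes missing from $n$ is the idea your proposal is missing, and it is also what explains why the threshold involves the full primorial $\prod_{j=1}^{c_{s,m}-1}p_j$ rather than a quantity supported near $p_{c_{s,m}}$ alone.

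A secondary, fixable issue: after your CRT step you only arrange that for each prime $p\leq p_{c-s-1}$ some $y_j$ is coprime to $p$, but a prime $p$ with $p^m\mid y_j$ for all $j$ could a priori be as large as $(Rq_j^m)^{1/m}$, well beyond $p_{c-s-1}$. One must first normalize a coordinate, e.g.\ reduce so that $y_s<q_s^m$ (this is the first substitution in the proof of Lemma \ref{F4}), after which any common $m$-th power divisor of the $y_j$ is automatically smaller than $q_s^m$ and the finite CRT sieve suffices. Unlike the fixed-pool problem, this is only a bookkeeping repair.
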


\begin{rem}
The inequality $2p_{j-s}>p_j$ holds for $j\gg 0$ as $$\pi(2x)-\pi(x)\sim\frac{2x}{\log 2x}-\frac{x}{\log x}\sim\frac{2x}{\log x}-\frac{x}{\log x}=\frac{x}{\log x}\to\infty,$$ when $x\to\infty$.
\end{rem}


\section{Auxiliary results}

The proofs of the above theorems are based on the following results. The first one is an improvement of \mbox{\cite[Theorem 1.16,\,p. 38]{MBN}}.

\begin{lem}\label{F1}
  Let $a_1,\ldots ,a_k$ be positive integers such that $(a_1,\ldots ,a_k)=1.$ 
\begin{enumerate}
\item[a)]If $b\ge \sum\limits_{i=1}^{k-1}a_i\left(\frac{a_k}{(a_k,a_i)}-1\right),$ then there exist integers $x_1,\ldots ,x_k\ge 0$ such that $a_1x_1+\ldots +a_kx_k=b.$
\item[b)] If $b\ge \sum\limits_{i=1}^{k}\frac{a_k a_i}{(a_k,a_i)},$ then there exist integers $x_1,\ldots ,x_k>0$ such that $a_1x_1+\ldots +a_kx_k=b.$
\end{enumerate}
\end{lem}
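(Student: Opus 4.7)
The plan is to deduce part (b) from part (a) by a simple shift, and to prove part (a) via a covering argument modulo $a_k$.

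For part (b), I would set $b':=b-\sum_{i=1}^{k}a_i$. Using $\tfrac{a_ka_k}{(a_k,a_k)}=a_k$, the hypothesis of (b) rearranges to $b'\ge\sum_{i=1}^{k-1}a_i\bigl(\tfrac{a_k}{(a_k,a_i)}-1\bigr)$, which is precisely the hypothesis of (a). Applying (a) to $b'$ yields $x_1,\ldots,x_k\ge 0$ with $\sum a_ix_i=b'$, and then $(x_1+1,\ldots,x_k+1)$ is the desired strictly positive representation of $b$. So the real content is part (a).

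For part (a), the heart of the matter is the claim: for every residue $r$ modulo $a_k$ there exist integers $x_1,\ldots,x_{k-1}$ with $0\le x_i\le\tfrac{a_k}{(a_k,a_i)}-1$ satisfying $\sum_{i=1}^{k-1}a_ix_i\equiv r\pmod{a_k}$. Granting this and applying it to $r\equiv b\pmod{a_k}$, one has
\[
\sum_{i=1}^{k-1}a_ix_i\le\sum_{i=1}^{k-1}a_i\Bigl(\tfrac{a_k}{(a_k,a_i)}-1\Bigr)\le b,
\]
so $b-\sum a_ix_i$ is a non-negative multiple of $a_k$ and the quotient furnishes the missing $x_k\ge 0$. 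To prove the claim, I would use the gcd identity $\bigl((a_1,a_k),\ldots,(a_{k-1},a_k)\bigr)=(a_1,\ldots,a_k)=1$ to deduce that the subgroup of $\Z/a_k\Z$ additively generated by $a_1,\ldots,a_{k-1}$ equals the whole group; hence some $y_1,\ldots,y_{k-1}\in\Z_{\ge 0}$ satisfy $\sum a_iy_i\equiv r\pmod{a_k}$. Since $a_i\cdot\tfrac{a_k}{(a_k,a_i)}=\op{lcm}(a_i,a_k)\equiv 0\pmod{a_k}$, repeatedly replacing each $y_i$ by $y_i-\tfrac{a_k}{(a_k,a_i)}$ while it exceeds $\tfrac{a_k}{(a_k,a_i)}-1$ terminates with values $x_i$ in the required range without changing the sum modulo $a_k$ or destroying non-negativity.

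The main obstacle I anticipate is tightening the range of $x_i$ to $\{0,\ldots,\tfrac{a_k}{(a_k,a_i)}-1\}$, which is what refines the coarser bound of \cite[Theorem~1.16]{MBN}; the crucial input is that $\tfrac{a_k}{(a_k,a_i)}$ is precisely the additive order of $a_i$ in $\Z/a_k\Z$, so no smaller reduction step will serve, and no larger range is forced upon us.
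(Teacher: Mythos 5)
Your proof is correct and is essentially the paper's argument in congruence language: the paper takes an integer solution $\sum a_iz_i=b$ from Bézout and reduces $z_1,\ldots,z_{k-1}$ modulo $\tfrac{a_k}{(a_k,a_i)}$, absorbing the quotients into $x_k$, while you perform the identical box-reduction on a solution of the congruence $\sum a_ix_i\equiv b\pmod{a_k}$ and recover $x_k$ as the resulting non-negative quotient. Both hinge on the same key fact that $a_i\cdot\tfrac{a_k}{(a_k,a_i)}\equiv 0\pmod{a_k}$, and your derivation of part~(b) from part~(a) by shifting each $x_i$ by $1$ is exactly the paper's.
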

\begin{proof}
There exist integers $z_1,\ldots ,z_k$ such that $a_1z_1+\ldots +a_kz_k=b,$
since $(a_1,\ldots ,a_k)=1.$ We divide each of the integers $z_1,\ldots ,z_{k-1}$ by $\frac{a_k}{(a_k,a_i)}$ and get:
\begin{equation*}
z_i=\frac{a_k}{(a_k,a_i)}q_i+x_i,\,\,0\le x_i\le \frac{a_k}{(a_k,a_i)}-1,\,\, i\in [k-1].
\end{equation*}
Let $x_k=z_k+\sum\limits_{i=1}^{k-1}\frac{a_i}{(a_k,a_i)}q_i.$ Then
\begin{align*}
b&=a_1z_1+\ldots +a_{k-1}z_{k-1}+a_kz_k=\\
&=a_1\left(\frac{a_k}{(a_k,a_1)}q_1+x_1\right)+\ldots +a_{k-1}\left(\frac{a_k}{(a_k,a_{k-1})}q_{k-1}+x_{k-1}\right)+a_kz_k=\\
&=a_1x_1+\ldots +a_{k-1}x_{k-1}+a_k\left(z_k+\sum\limits_{i=1}^{k-1}\frac{a_i}{(a_k,a_i)}q_i\right)\\
&=a_1x_1+a_2x_2+\ldots +a_{k-1}x_{k-1}+a_kx_k\le \sum\limits_{i=1}^{k-1}a_i\left(\frac{a_k}{(a_k,a_i)}-1\right) +a_kx_k.
\end{align*}
If $b\ge \sum\limits_{i=1}^{k-1}a_i\left(\frac{a_k}{(a_k,a_i)}-1\right) $
then $a_kx_k\ge 0$ and so $x_k\ge 0.$ This completes the proof of $a).$\\
If $b\ge \sum\limits_{i=1}^{k}\frac{a_k a_i}{(a_k,a_i)},$ then $b-\sum\limits_{i=1}^{k}a_i\ge \sum\limits_{i=1}^{k-1}a_i\left(\frac{a_k}{(a_k,a_i)}-1\right).$ By part $a)$ we know that there are $x_1,\ldots ,x_k\ge 0$ such that $a_1x_1+\ldots +a_kx_k=b-\sum\limits_{i=1}^{k}a_i.$ Therefore $a_1(x_1+1)+\ldots +a_k(x_k+1)=b$ and $x_i+1>0.$ This completes the proof of $b).$
\end{proof}

The next two lemmas will be crucial in the proof of Theorem \ref{S3}. From now on, by $p_j$ we mean the $j$-th prime number with respect to the indexing prime numbers in ascending order. Moreover, we put $p_j=1$ for $j\in\Z\bs\N$ and set a convention that the product over empty set is equal to $1$.

\begin{lem}\label{F2}
Let $p_j, p_k, p_l$ be prime numbers, where $j<k<l$. Then, a positive integer $n$ coprime to $p_jp_kp_l$ can be written in the form $p_jp_kx+p_jp_ly+p_kp_lz$ with positive integers $x,y,z$ such that $(x,y,z)=1, (p_j,z)=(p_k,y)=(p_l,x)=1$ on condition that:
\begin{enumerate}
\item $j\geq 10$ and $n\ge (8p_1\cdot\ldots\cdot p_{j-6}+6)p_j p_k p_l$,
\item $j\in\{8,9\}$ and $n\geq (6p_1\cdot\ldots\cdot p_{j-5}+6)p_j p_k p_l$,
\item $s\in\{6,7\}$ and $n\geq (4p_1\cdot\ldots\cdot p_{j-4}+6)p_j p_k p_l$,
\item $s=5$ and $n\geq (3p_1p_2+6)p_j p_k p_l$,
\item $s=4$ and $n\geq (2p_1p_2+6)p_j p_k p_l$,
\item $s=3$ and $n\geq 6p_j p_k p_l$,
\item $s=2$ and $n\geq 4p_j p_k p_l$,
\item $s=1$ and $n\geq 3p_j p_k p_l$.
\end{enumerate}
\end{lem}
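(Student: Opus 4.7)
The plan is to separate the four coprimality conditions into ``automatic'' and ``nontrivial'' ones, and to handle the nontrivial one by a two-parameter sieve. First, I would observe that the conditions $(p_j,z)=(p_k,y)=(p_l,x)=1$ hold automatically from the hypothesis $(n,p_jp_kp_l)=1$: reducing $p_jp_kx+p_jp_ly+p_kp_lz=n$ modulo $p_l$ gives $p_jp_kx\equiv n\pmod{p_l}$, and $(n,p_l)=1$ already forces $(x,p_l)=1$; the same argument applied modulo $p_j$ and $p_k$ handles the other two. So the only real obstruction is to arrange $(x,y,z)=1$.

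Next, I would parametrize all positive solutions by two integer parameters. Let $r\in\{1,\dots,p_l-1\}$ be the unique residue with $p_jp_kr\equiv n\pmod{p_l}$ and set $x=r+p_l\alpha$ with $\alpha\in\Z_{\ge 0}$. Dividing the resulting equation by $p_l$ yields the two-variable problem $p_jy+p_kz=M_\alpha$, where $M_\alpha=(n-p_jp_k(r+p_l\alpha))/p_l$. Since $(p_j,p_k)=1$, Lemma~\ref{F1}(b) then produces positive integer solutions as soon as $M_\alpha\ge p_jp_k+p_k$, and all such pairs take the form $y=y_0(\alpha)+p_k\beta$, $z=z_0(\alpha)-p_j\beta$ with $\beta$ in an explicit integer interval of length of order $M_\alpha/(p_jp_k)$. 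Altogether, the admissible $(\alpha,\beta)$ fill a triangular lattice region of $\sim T^2/2$ points, where $T=\lfloor n/(p_jp_kp_l)\rfloor$.

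To force $(x,y,z)=1$ I would set up a sieve. Any prime $q$ dividing $(x,y,z)$ must divide $n$ and be distinct from $p_j,p_k,p_l$; conversely, for such a $q$, $q\mid x$ together with $q\mid y$ already forces $q\mid z$ via $p_kp_lz\equiv n-p_jp_kx-p_jp_ly\pmod q$ and $q\ne p_k,p_l$. Hence the forbidden $(\alpha,\beta)$ for each bad $q$ lie on a single arithmetic pattern $\alpha\equiv\alpha_q$, $\beta\equiv\beta_q\pmod q$, contributing density $1/q^2$. My plan is to split the bad primes into ``small'' ones $q\le p_{j-c}$ (for a case-dependent $c$, ranging from essentially $0$ in cases~6--8 up to $6$ in case~1) and ``large'' ones $q>p_{j-c}$. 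For the small primes I would force $x$ to be coprime to $P:=p_1\cdots p_{j-c}$ by choosing $\alpha$ in a fixed nonempty CRT residue class modulo $P$, which eats at most one full period of $\alpha$-values (this is exactly the factor $p_1\cdots p_{j-c}$ appearing in each threshold). For the large primes I would use a union bound on the remaining forbidden patterns, which absorbs into the multiplicative constant $2,3,4,6,8$ in front of $P$; the additive $+6$ accounts for the Frobenius offset $p_jp_k+p_k$ from Lemma~\ref{F1}(b) and the two triangle-boundary losses.

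The hard part will be the quantitative calibration across the eight cases: I have to pick $c$ so that the remaining large-prime tail $\sum_{q>p_{j-c},\,q\mid n}q^{-2}$ fits inside the slack provided by the chosen multiplier, while keeping the CRT step inside the admissible $\alpha$-range. For $j\le 3$ there are at most two small primes to worry about, so the argument degenerates to an elementary count on a triangle of side $T\in\{3,4,6\}$, yielding the bounds $3p_jp_kp_l$, $4p_jp_kp_l$ and $6p_jp_kp_l$; for $j\ge 4$ the CRT product becomes dominant, and both ends of the estimate must be tracked carefully to match the stated constants exactly.
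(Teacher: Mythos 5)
Your reduction of the problem to the single condition $(x,y,z)=1$ is correct and matches the paper, and your parametrization of the solution set by $(\alpha,\beta)$ together with CRT on the small primes is a legitimate alternative framework. But there is a genuine gap in the step where you dispose of the large prime divisors of $n$ by a union bound that ``absorbs into the multiplicative constant.'' The dangerous primes are those $q\mid n$ with $p_{j-c}<q\le n/(p_kp_l)$ (any larger prime cannot divide $z\ge 1$), and when $n$ is close to the threshold while $p_k,p_l$ are enormous, $n$ can be divisible by essentially \emph{all} primes in that range. Take $j=10$, so $T:=n/(p_jp_kp_l)\approx 8\cdot 210+6=1686$: the restricted lattice region has only about $T^2/(2\cdot 210)\approx 6.8\times 10^3$ points, whereas there are $\pi(1686\,p_{10})\approx 5\times 10^3$ primes up to $Tp_j=48894$, each of which (for $q>T$) can knock out a full lattice point, and the smaller ones contribute $\bigl(T/(qP)+1\bigr)\bigl(T/q+1\bigr)$ apiece; summing these gives a forbidden count that exceeds the available count. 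So the union bound, as stated, does not close, and the $\sum q^{-2}$ heuristic is misleading here because the moduli are comparable to or larger than the region. Your plan would need a substantially sharper argument (or a different handling of medium-sized primes) before the stated constants could be matched.

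The paper avoids this difficulty entirely with one device that is absent from your proposal: before doing anything else, it replaces $z$ by $z-\lfloor z/p_j\rfloor p_j$ (compensating in $x$), which is legitimate since $p_j\nmid z$, and thereby forces $0<z<p_j$. After this reduction \emph{every} prime divisor of $(x,y,z)$ is smaller than $p_j$, so only the finitely many primes $p_1,\dots,p_{j-1}$ ever need to be controlled, independently of the factorization of $n$. The paper then kills these by explicit substitutions $x\mapsto x+Cp_l\prod_{p\in J}p$, $y\mapsto y-Cp_k\prod_{p\in J}p$: the product over $J$ flips divisibility for the primes up to $p_{j-6}$ that still divide $(x,y,z)$, and the constant $C\in\{1,2,3,4,6,8\}$ is chosen by pigeonhole so that none of $p_{j-5},\dots,p_{j-1}$ divides the new $x$; the thresholds $(8p_1\cdots p_{j-6}+6)p_jp_kp_l$ are exactly the budget needed to keep $y$ positive plus the Frobenius-type bound from Lemma~\ref{F1}(b). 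I recommend you adopt this initial reduction of $z$ modulo $p_j$; with it, your sieve over $(\alpha,\beta)$ would only ever involve primes below $p_j$ and the large-prime problem disappears.
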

\begin{proof}
Let $n=p_jp_kx+p_jp_ly+p_kp_lz$ for some $x,y,z\in\N$.\\
At first, let us note that for any value of $j\in\N$ we can assume $z<p_j.$ If $z\ge p_j,$ then we replace
\begin{equation}\label{Subs1}
z\mapsto z-\left\lfloor\frac{z}{p_j}\right\rfloor p_j>0,\, x\mapsto x+\left\lfloor\frac{z}{p_j}\right\rfloor p_l,
\end{equation}
since $(p_j,z)=1.$
\bigskip

Now, we focus on the case of $j\geq 10$.

We can modify the coefficients $x,y$ such that $(x,y,6)=1.$ Indeed, we replace
\begin{equation}\label{Subs2}
\begin{cases}
x\mapsto x+p_l,\,y\mapsto y-p_k, & \mbox{if} \quad 6|(x,y,z),\\
x\mapsto x+2p_l,\,y\mapsto y-2p_k, & \mbox{if} \quad 3\mid (x,y,z),\, 2\nmid(x,y,z),\\
x\mapsto x+3p_l,\,y\mapsto y-3p_k, & \mbox{if} \quad 2\mid (x,y,z),\, 3\nmid(x,y,z).
\end{cases}
\end{equation}
Let $J_j=\{p:p\le p_{j-6},p\nmid(x,y,z),\,p\in\mathbb{P}\}$.\\
Note that $J_j\supseteq\{2,3\}$ and $\{p:p\le p_{j-6}:\,p\in\mathbb{P}\}\supseteq\{2,3,5,7\}.$\\
If $(x,y,z)>1,$ then we can replace
\begin{equation}\label{Subs3}
x\mapsto x+Cp_l\prod\limits_{p\in J_j}p,\,y\mapsto y-Cp_k\prod\limits_{p\in J_j}p,
\end{equation}
where $C\in\{1,2,3,4,6,8\}$ such that $(p_{j-5}\cdot\ldots\cdot p_{j-1},x+Cp_l\prod\limits_{p\in J_j}p)=1.$\\
The last substitution can be done because:
\begin{itemize}
\item if $C,C'\in \{1,2,3,4,6,8\},\, C\neq C'$, then $p_i>7\ge |C-C'|$ for $j-5\le i\le j-1$; thus $C\not\equiv C' \pmod {p_i}$;
\item for each $j-5\le i\le j-1$ there exists at most one $C\in\{1,2,3,4,6,8\}$ such that $p_i\mid x+Cp_l\prod\limits_{p\in J_j}p$;
\item therefore, there exists at least one $C\in\{1,2,3,4,6,8\}$ such that $p_i\nmid x+Cp_l\prod\limits_{p\in J_j}p$ for any $j-5\le i\le j-1$.
\end{itemize}
Now we will explain that after all these modifications we have $\left(x,y,z\right)=1$.
\begin{itemize}
\item Since $z<p_j$, all the prime divisors of $z$ are less than $p_j$ and thus all the prime divisors of $(x,y,z)$ are less than $p_j$.
\item The substitution (\ref{Subs2}) ensures us that $2$ and $3$ are not divisors of $(x,y,z)$.
\item As $2,3\in J_j$, after substitution (\ref{Subs3}) the numbers $2$ and $3$ still are not divisors of $(x,y,z)$.
\item If $p\in J_j$, then substitution (\ref{Subs3}) does not change divisibility of $x$ and $y$ by $p$. Hence $p\nmid (x,y,z)$.
\item If $p\not\in J_j$ and $p<p_{j-5}$, then $p\mid (x,y,z)$ before substitution (\ref{Subs3}). Because $p\nmid Cp_kp_l\prod\limits_{q\in J_j}q$, the substitution (\ref{Subs3}) changes the divisiblity of $x$ and $y$ by $p$, i.e. $p\nmid x$ and $p\nmid y$ after substitution (\ref{Subs3}).
\item If $p=p_i,$ where $j-5\leq i\leq j-1$, then $p_i\nmid x$ after substitution (\ref{Subs3}). This follows directly from the choice of $C$.
\end{itemize}
If the beginning value of $y$ is greater than $p_k(8p_1\cdot\ldots\cdot p_{j-6}+3)$, then it remains positive after substitutions (\ref{Subs2}) and (\ref{Subs3}). Hence, if $n-p_jp_kp_l(8p_1\cdot\ldots\cdot p_{j-6}+3)\ge 3p_j p_k p_l$, then by Lemma \ref{F1} b) the number $n-p_jp_kp_l(8p_1\cdot\ldots\cdot p_{j-6}+3)$ can be written in the form $p_jp_k\tilde{x}+p_jp_l\tilde{y}+p_kp_l\tilde{z}$ for some $\tilde{x},\tilde{y},\tilde{z}\in\N$. Thus, $n$ can be written in the form $p_jp_kx+p_jp_ly+p_kp_lz$ for some $x,z>0$ and $y>p_k(8p_1\cdot\ldots\cdot p_{j-6}+3)$. Hence, applying substitutions (\ref{Subs1})--(\ref{Subs3}), we conclude that $n$ can be written as $p_jp_kx+p_jp_ly+p_kp_lz$ for some $x,y,z\geq 0$ with $(x,y,z)=1$. The equality $(p_j,z)=(p_k,y)=(p_l,x)=1$ follows directly from the assumption $(n,p_j p_k p_l)=1$.

\bigskip
Since explanations in the remaining cases are analogous, we will only give the sequences of substitutions to perform if necessary.

\bigskip
If $j\in\{8,9\}$, then we replace (if needed)
\begin{equation*}
\begin{cases}
x\mapsto x+p_l,\,y\mapsto y-p_k, & \mbox{if} \quad 6|(x,y,z),\\
x\mapsto x+2p_l,\,y\mapsto y-2p_k, & \mbox{if} \quad 3\mid (x,y,z),\, 2\nmid(x,y,z),\\
x\mapsto x+3p_l,\,y\mapsto y-3p_k, & \mbox{if} \quad 2\mid (x,y,z),\, 3\nmid(x,y,z).
\end{cases}
\end{equation*}
to obtain $(x,y,6)=1$. Let $J_j=\{p:p\le p_{j-5}:p\nmid(x,y,z),\,p\in\mathbb{P}\}$. Note that $\{p:p\le p_{j-5}:\,p\in\mathbb{P}\}\supseteq\{2,3,5\}.$\\
If $(x,y,z)>1,$ then we can substitute
\begin{equation*}
x\mapsto x+Cp_l\prod\limits_{p\in J_j}p,\,y\mapsto y-Cp_k\prod\limits_{p\in J_j}p,
\end{equation*}
where $C\in\{1,2,3,4,6\}$ such that $(p_{j-4}\cdot\ldots\cdot p_{j-1},x+Cp_l\prod\limits_{p\in J_j}p)=1.$

\bigskip
If $j\in\{6,7\}$, then we replace (if needed)
\begin{equation*}
\begin{cases}
x\mapsto x+p_l,\,y\mapsto y-p_k, & \mbox{if} \quad 6|(x,y,z),\\
x\mapsto x+2p_l,\,y\mapsto y-2p_k, & \mbox{if} \quad 3\mid (x,y,z),\, 2\nmid(x,y,z),\\
x\mapsto x+3p_l,\,y\mapsto y-3p_k, & \mbox{if} \quad 2\mid (x,y,z),\, 3\nmid(x,y,z).
\end{cases}
\end{equation*}
to obtain $(x,y,6)=1$. Let $J_j=\{p:p\le p_{j-4}:p\nmid(x,y,z),\,p\in\mathbb{P}\}$. Note that $\{p:p\le p_{j-4}:\,p\in\mathbb{P}\}\supseteq\{2,3\}.$\\
If $(x,y,z)>1,$ then we can substitute
\begin{equation*}
x\mapsto x+Cp_l\prod\limits_{p\in J_j}p,\,y\mapsto y-Cp_k\prod\limits_{p\in J_j}p,
\end{equation*}
where $C\in\{1,2,3,4\}$ such that $(p_{j-3}p_{j-2}p_{j-1},x+Cp_l\prod\limits_{p\in J_j}p)=1.$

\bigskip
If $j=5$, then we replace (if needed)
\begin{equation*}
\begin{cases}
x\mapsto x+p_l,\,y\mapsto y-p_k, & \mbox{if} \quad 6|(x,y,z),\\
x\mapsto x+2p_l,\,y\mapsto y-2p_k, & \mbox{if} \quad 3\mid (x,y,z),\, 2\nmid(x,y,z),\\
x\mapsto x+3p_l,\,y\mapsto y-3p_k, & \mbox{if} \quad 2\mid (x,y,z),\, 3\nmid(x,y,z).
\end{cases}
\end{equation*}
to obtain $(x,y,6)=1$. If $(x,y,z)>1,$ then we can substitute
\begin{equation*}
x\mapsto x+6Cp_l,\,y\mapsto y-6Cp_k,
\end{equation*}
where $C\in\{1,2,3\}$ such that $(5\cdot 7,x+6Cp_l)=1$ (note that in this case $p_{j-2}=5$ and $p_{j-1}=7$).

\bigskip
If $j=4$, then we replace (if needed)
\begin{equation*}
\begin{cases}
x\mapsto x+p_l,\,y\mapsto y-p_k, & \mbox{if} \quad 6|(x,y,z),\\
x\mapsto x+2p_l,\,y\mapsto y-2p_k, & \mbox{if} \quad 3\mid (x,y,z),\, 2\nmid(x,y,z),\\
x\mapsto x+3p_l,\,y\mapsto y-3p_k, & \mbox{if} \quad 2\mid (x,y,z),\, 3\nmid(x,y,z).
\end{cases}
\end{equation*}
to obtain $(x,y,6)=1$. If $(x,y,z)>1,$ then we can substitute
\begin{equation*}
x\mapsto x+6Cp_l,\,y\mapsto y-6Cp_k,
\end{equation*}
where $C\in\{1,2\}$ such that $(5,x+6Cp_l)=1$ (note that in this case $p_{j-1}=5$).

\bigskip
If $j=3$, then we replace (if needed)
\begin{equation*}
\begin{cases}
x\mapsto x+p_l,\,y\mapsto y-p_k, & \mbox{if} \quad 6|(x,y,z),\\
x\mapsto x+2p_l,\,y\mapsto y-2p_k, & \mbox{if} \quad 3\mid (x,y,z),\, 2\nmid(x,y,z),\\
x\mapsto x+3p_l,\,y\mapsto y-3p_k, & \mbox{if} \quad 2\mid (x,y,z),\, 3\nmid(x,y,z).
\end{cases}
\end{equation*}
to obtain $(x,y,6)=1$.

\bigskip
If $j=2$, then we replace (if needed)
\begin{equation*}
x\mapsto x+p_l,\,y\mapsto y-p_k, \quad \mbox{if} \quad 2|(x,y,z)
\end{equation*}
to obtain $(x,y,2)=1$.

\bigskip
If $j=1$, then we do not have to do anything more as $0<z<p_j=2$, in other words $z=1$.
\end{proof}

\begin{lem}\label{F3}
Let $j\geq 5$. Then $2p_j>p_{j+3}$.
\end{lem}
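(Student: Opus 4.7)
The statement is equivalent to saying that, for $j \geq 5$, the open interval $(p_j, 2 p_j)$ contains at least three primes. My plan is to deduce this from a classical effective form of Bertrand's postulate.

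The cleanest route is via Ramanujan's three-prime result: $\pi(x) - \pi(x/2) \geq 3$ for every real $x \geq 17$ (equivalently, $17$ is the third Ramanujan prime). For $j \geq 5$ one has $2 p_j \geq 22 \geq 17$, so the bound applied at $x = 2 p_j$ yields $\pi(2 p_j) - \pi(p_j) \geq 3$. Because $2 p_j$ is an even integer greater than $2$, it is composite, so those three primes all lie in the open interval $(p_j, 2 p_j)$; in particular $p_{j+3} < 2 p_j$.

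A more elementary alternative, should one wish to avoid invoking Ramanujan primes, is to iterate Nagura's theorem (every interval $[n, \tfrac{6}{5} n]$ with $n \geq 25$ contains a prime) three times, starting from $p_j + 1$. Three iterations produce three primes strictly above $p_j$ and below $(\tfrac{6}{5})^3 p_j + O(1) = \tfrac{216}{125} p_j + O(1)$, which is less than $2 p_j$ for all $p_j$ past a modest explicit threshold. The finitely many remaining cases are then settled by direct inspection of a prime table, e.g.\ $2p_5 = 22 > 19 = p_8$, $2p_6 = 26 > 23 = p_9$, $2p_7 = 34 > 29 = p_{10}$, and so on.

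The only mild obstacle lies in the elementary alternative, where one must verify that the three successive Nagura intervals really yield three distinct primes below $2 p_j$ and then align the asymptotic regime with the table check. Citing Ramanujan's bound sidesteps this bookkeeping entirely and reduces the whole argument to a single line.
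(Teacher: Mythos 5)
Your primary argument—applying Ramanujan's bound $\pi(x)-\pi(x/2)\geq 3$ for $x\geq 17$ at $x=2p_j$ (valid since $2p_j\geq 22$ for $j\geq 5$, and $2p_j$ is composite)—is correct and is precisely what the paper does, as its proof consists of citing Ramanujan's 1919 note on Bertrand's postulate, where this very inequality is stated. The Nagura-based alternative you sketch is unnecessary but does no harm.
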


\begin{proof}
Follows from \cite{Ram}.
\end{proof}

\begin{lem}\label{F4}
Let $s,m$ be positive integers with $s\geq 3$. Let $p_{j_1},\ldots , p_{j_s}$ be prime numbers, where $j_1>\ldots >j_s$. Then, a positive integer $n$ coprime to $p_{j_1}\cdot\ldots\cdot p_{j_s}$ can be written in the form $$\sum_{i=1}^s x_i\prod_{h\in [s]\bs\{i\}}p_{j_h}^m$$ with positive integers $x_1,\ldots ,x_s$ such that $(x_1,\ldots ,x_s)_m=1$, $(p_{j_1},x_1)=\ldots =(p_{j_s},x_s)=1$ if one of the following conditions holds:
\begin{enumerate}
\item $n\ge \left(s+1+2^{(m+1)s}\prod_{j=1, j\not\in\{j_1,\ldots ,j_s\}}^{j_1-(m+1)s-1}p_j\right)\left(\prod_{i=1}^s p_{j_i}^m\right)$ and $p_{j_1-(m+1)s}>2^{(m+1)s}$,
\item $n\ge (s+1)\left(\prod_{j=1}^{j_1}p_j\right)\left(\prod_{i=1}^{s}p_{j_i}^{m-1}\right)$.
\end{enumerate}
\end{lem}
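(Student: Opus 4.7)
My plan is to produce $n = \sum_{i=1}^s x_i Q_i$ with $Q_i = \prod_{h \in [s]\bs\{i\}} p_{j_h}^m$ by first invoking Lemma \ref{F1}(b) on $(Q_1,\ldots,Q_s)$ to secure an initial positive representation, and then adjusting the $x_i$ via sum-preserving moves to enforce $(x_1,\ldots,x_s)_m = 1$. That $(Q_1,\ldots,Q_s)=1$ follows since $p_{j_l}$ appears in $Q_h$ precisely when $h\neq l$. The conditions $(p_{j_i},x_i)=1$ hold automatically for any representation: reducing $n = \sum_h x_h Q_h$ modulo $p_{j_i}$ kills every $h\neq i$ summand (because $p_{j_i}\mid Q_h$), leaving $x_i Q_i \equiv n \pmod{p_{j_i}}$, so $(x_i,p_{j_i})=1$ follows from $(n,p_{j_i})=1$ and $(Q_i,p_{j_i})=1$.

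The admissible sum-preserving moves have the shape $x_i \mapsto x_i - c p_{j_i}^m$, $x_{i'} \mapsto x_{i'} + c p_{j_{i'}}^m$ for $i\neq i'$ and $c\in\Z$; since $p_{j_i}^m Q_i = p_{j_{i'}}^m Q_{i'} = \prod_h p_{j_h}^m$, the net change $-c p_{j_i}^m Q_i + c p_{j_{i'}}^m Q_{i'}$ vanishes. For any prime $p\notin\{p_{j_i},p_{j_{i'}}\}$, a coefficient $c$ with $p\nmid c$ forces $v_p(x_i - c p_{j_i}^m)=0=v_p(x_{i'} + c p_{j_{i'}}^m)$, breaking common $p^m$-divisibility at both coordinates; by CRT a single $c$ can simultaneously handle any finite set of bad primes. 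A central observation narrowing the search is that $p^m\mid x_i$ for every $i$ implies $p^m\mid n$, hence $p\notin\{p_{j_1},\ldots,p_{j_s}\}$ by the hypothesis $(n,p_{j_1}\cdots p_{j_s})=1$. For case (ii) with bound $n\geq (s+1)(\prod_{j=1}^{j_1}p_j)(\prod_i p_{j_i}^{m-1})$, I first reduce each $x_l$ ($l\geq 2$) to the least positive residue of its class modulo $p_{j_l}^m$, determined by $x_l Q_l \equiv n\pmod{p_{j_l}^m}$; this forces $x_l < p_{j_l}^m$, hence $p^m\nmid x_l$ for any $p>p_{j_l}$, so the only possible obstructions are primes $p<p_{j_s}$. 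These are then destroyed by a single $(1,2)$-move with $c$ coprime (via CRT) to the product of the obstructing primes. For case (i), I adapt the trick from Lemma \ref{F2}: package the small primes $p\leq p_{j_1-(m+1)s-1}$ lying outside $\{p_{j_1},\ldots,p_{j_s}\}$ into a single factor of $c$, and let $c$ range over $\{1,2,\ldots,2^{(m+1)s}\}$ to avoid the $(m+1)s$ remaining top primes; the hypothesis $p_{j_1-(m+1)s}>2^{(m+1)s}$ guarantees that distinct values of $c$ give distinct residues modulo each such top prime, so a suitable $c$ exists.

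The main obstacle is preserving positivity $x_i>0$ under all these substitutions. The $(1,2)$-move decreases $x_1$ by $c p_{j_1}^m$, and $c$ can reach size $\prod_{j\leq j_1}p_j$ in case (ii), respectively $2^{(m+1)s}\prod_{j\leq j_1-(m+1)s-1,\,j\notin\{j_1,\ldots,j_s\}} p_j$ in case (i). The quantitative lower bounds on $n$ in the statement are tuned precisely so that the base representation from Lemma \ref{F1}(b) yields $x_1$ large enough to absorb this decrement: $(s+1)$ supplies the Lemma \ref{F1}(b) baseline, $\prod_i p_{j_i}^m$ captures the scale of one substitution step, and the product-of-primes factor buys the remaining buffer. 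The detailed bookkeeping of residues, sizes, and simultaneous coprimality constitutes the bulk of the remaining work.
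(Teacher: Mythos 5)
Your overall strategy coincides with the paper's: obtain a positive representation from Lemma \ref{F1}(b) applied to the pairwise-coprime moduli $Q_i=\prod_{h\neq i}p_{j_h}^m$, reduce coefficients modulo $p_{j_l}^m$ so that any common $m$-th power divisor must come from primes below $p_{j_s}$, observe that $(p_{j_i},x_i)=1$ is automatic from $(n,p_{j_i})=1$, and then repair $(x_1,\ldots,x_s)_m=1$ by a sum-preserving $(1,2)$-move whose size is absorbed by the stated lower bound on $n$. The coprimality observation and the positivity accounting are sound.

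The gap is in the repair step. Your assertion that ``a coefficient $c$ with $p\nmid c$ forces $v_p(x_i - c p_{j_i}^m)=0$'' is only valid when $p\mid x_i$; for a prime that does \emph{not} currently divide $x_1$ the move can make it divide the new value. Concretely, take a prime $q<p_{j_s}$ with $q^m\mid x_l$ for all $l\ge 3$ but $q\nmid x_1$: then $q$ is not an obstructing prime, so your CRT condition places no constraint on $c$ modulo $q$, yet for $c$ in a suitable residue class modulo $q^m$ (which exists whenever $x_1p_{j_2}^m+x_2p_{j_1}^m\equiv 0\pmod{q^m}$) the move \emph{creates} the common divisor $q^m$ of all the $x_i$. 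Thus ``$c$ coprime to the product of the obstructing primes'' is not a sufficient specification. The paper's resolution is to take the increment to be $Cp_{j_1}^m\prod_{p\in J}p$, where $J$ is precisely the set of small primes \emph{not} dividing $(x_1,\ldots,x_s)$ (and outside $\{p_{j_2},\ldots,p_{j_s}\}$): for $p\in J$ the divisibility of $x_1,x_2$ by $p$ is unchanged, so no new obstruction can appear, while every small prime outside $J$ divides $x_1,x_2$ but not the increment, so its divisibility is switched off --- all in a single move. Your phrase ``package the small primes into a single factor of $c$'' in case (i) omits the crucial restriction to primes not dividing the gcd (without it the obstructing primes would divide the increment and survive), and in case (ii) you impose no such packaging at all. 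A secondary point in case (i): the auxiliary parameter must simultaneously avoid one residue class modulo each of the $(m+1)s$ primes $p_{j_1-(m+1)s},\ldots,p_{j_1-1}$ \emph{and} remain coprime to the odd obstructing primes; the paper secures both at once by first clearing the prime $2$ from the gcd and letting $C$ range over the powers $\{1,2,4,\ldots,2^{(m+1)s}\}$, whereas your range $\{1,\ldots,2^{(m+1)s}\}$ only addresses the first requirement.
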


\begin{proof}
At first, let us note that for any value of $j_s\in\N$ we can assume $x_s<p_{j_s}^m.$ If $x_s\ge p_{j_s}^m$, then we replace
\begin{equation}\label{Subs4}
x_s\mapsto x_s-\left\lfloor\frac{x_s}{p_{j_s}^m}\right\rfloor p_{j_s}^m>0,\, x_1\mapsto x_1+\left\lfloor\frac{x_s}{p_{j_s}^m}\right\rfloor p_{j_1}^m,
\end{equation}
since $(p_{j_s},x_s)=1.$

\bigskip

Consider the case (1).

We can modify the coefficients $x_1,x_2$ such that $(x_1,x_2,2)=1.$ Indeed, we replace
\begin{equation}\label{Subs5}
x_1\mapsto x_1+p_{j_1}^m,\,x_2\mapsto x_2-p_{j_2}^m, \mbox{ if } 2|(x_1,\ldots ,x_s).
\end{equation}
Let $J=\{p:p\le p_{j_1-(m+1)s-1},p\not\in\{p_{j_2},\ldots ,p_{j_s}\},p\nmid(x_1,\ldots ,x_s),\,p\in\mathbb{P}\}$. Note that $2\in J$.\\
If $(x_1,\ldots ,x_s)_m>1,$ then we can replace
\begin{equation}\label{Subs6}
x_1\mapsto x_1+Cp_{j_1}^m\prod\limits_{p\in J}p,\,x_2\mapsto x_2-Cp_{j_2}^m\prod\limits_{p\in J}p,
\end{equation}
where $C\in\{1,2,4,\ldots ,2^{(m+1)s}\}$ such that $(p_{j_1-(m+1)s}\cdot\ldots\cdot p_{j_1-1},x_1+Cp_{j_1}^m\prod\limits_{p\in J}p)=1.$\\
The last substitution can be done because:
\begin{itemize}
\item if $C,C'\in \{1,2,4,\ldots ,2^{(m+1)s}\},\, C\neq C'$, then $p_i>2^{(m+1)s}-1\ge |C-C'|$ for $j_1-(m+1)s\le i\le j_1-1$; thus $C\not\equiv C' \pmod {p_i}$;
\item for each $j_1-(m+1)s\le i\le j_1-1$ there exists at most one $C\in\{1,2,4,\ldots ,2^{(m+1)s}\}$ such that $p_i\mid x_1+Cp_{j_1}^m\prod\limits_{p\in J}p$;
\item therefore, there exists at least one $C\in\{1,2,4,\ldots ,2^{(m+1)s}\}$ such that $p_i\nmid x_1+Cp_{j_1}^m\prod\limits_{p\in J}p$ for any $j_1-(m+1)s\le i\le j_1-1$.
\end{itemize}
Now we will explain that after all these modifications we have $\left(x_1,\ldots ,x_s\right)_m=1$.
\begin{itemize}
\item Since $x_s<p_{j_s}^m$, all the $m$-th power divisors of $x_s$ are less than $p_{j_s}^m$ and thus all the $m$-th power divisors of $(x_1,\ldots ,x_s)$ are less than $p_{j_s}^m$.
\item The substitution (\ref{Subs5}) ensures us that $2$ is not a divisor of $(x_1,\ldots ,x_s)$.
\item As $2\in J$, after substitution (\ref{Subs6}) the number $2$ is still not a divisor of $(x_1,\ldots ,x_s)$.
\item If $p\in J$, then substitution (\ref{Subs6}) does not change divisibility of $x_1$ and $x_2$ by $p$. Hence $p\nmid (x_1,\ldots ,x_s)$.
\item If $p\not\in J$ and $p<p_{j_1-(m+1)s}$, then $p\mid (x_1,\ldots ,x_s)$ before substitution (\ref{Subs6}). Because $p\nmid Cp_{j_1}p_{j_2}\prod\limits_{q\in J}q$, the substitution (\ref{Subs6}) changes the divisiblity of $x_1$ and $x_2$ by $p$, i.e. $p\nmid x_1$ and $p\nmid x_2$ after substitution (\ref{Subs6}).
\item If $p=p_i,$ where $j_1-(m+1)s\leq i\leq j_1-1$, then $p_i\nmid x_1$ after substitution (\ref{Subs6}). This follows directly from the choice of $C$.
\end{itemize}
If the beginning value of $x_2$ is greater than $p_{j_2}^m\left(1+2^{(m+1)s}\prod_{j=1, j\not\in\{j_1,\ldots ,j_s\}}^{j_1-(m+1)s-1}p_j\right)$, then it remains positive after substitutions (\ref{Subs5}) and (\ref{Subs6}). Hence, if $n-p_{j_1}^mp_{j_2}^m\cdot\ldots\cdot p_{j_s}^m\left(1+2^{(m+1)s}\prod_{j=1, j\not\in\{j_1,\ldots ,j_s\}}^{j_1-(m+1)s-1}p_j\right)\ge sp_{j_1}^mp_{j_2}^m\cdot\ldots\cdot p_{j_s}^m$, then by Lemma \ref{F1} b) the number $n-p_{j_1}^mp_{j_2}^m\cdot\ldots\cdot p_{j_s}^m\left(1+2^{(m+1)s}\prod_{j=1, j\not\in\{j_1,\ldots ,j_s\}}^{j_1-(m+1)s-1}p_j\right)$ can be written in the form $\sum_{i=1}^s \tilde{x}_i\prod_{h\in [s]\bs\{i\}}p_{j_h}^m$ for some $\tilde{x}_1,\ldots ,\tilde{x}_s\in\N$. Thus, $n$ can be written in the form $\sum_{i=1}^s x_i\prod_{h\in [s]\bs\{i\}}p_{j_h}^m$ for some $x_1,x_3,\ldots ,x_s>0$ and $x_2>p_{j_2}^m\left(1+2^{(m+1)s}\prod_{j=1, j\not\in\{j_1,\ldots ,j_s\}}^{j_1-(m+1)s-1}p_j\right)$. Hence, applying substitutions (\ref{Subs4})--(\ref{Subs6}), we conclude that $n$ can be written as $\sum_{i=1}^s x_i\prod_{h\in [s]\bs\{i\}}p_{j_h}^m$ for some $x_1,\ldots ,x_s\geq 0$ with $(x_1,\ldots ,x_s)=1$. The equality $(p_{j_1},x_1)=\ldots =(p_{j_s},x_s)=1$ follows directly from the assumption $(n,p_{j_1}\cdot\ldots\cdot p_{j_s})=1$.

\bigskip

Now we consider the case (2).

Let $J=\{p:p\le p_{j_1-1},p\not\in\{p_{j_2},\ldots ,p_{j_s}\},p\nmid(x_1,\ldots ,x_s),\,p\in\mathbb{P}\}$. If $(x_1,\ldots ,x_s)>1,$ then we can replace
\begin{equation}\label{Subs7}
x_1\mapsto x_1+p_{j_1}^m\prod\limits_{p\in J}p,\,x_2\mapsto x_2-p_{j_2}^m\prod\limits_{p\in J}p.
\end{equation}

Now we will explain that after the above modification we have $\left(x_1,\ldots ,x_s\right)_m=1$.
\begin{itemize}
\item Since $x_s<p_{j_s}^m$, all the $m$-th power divisors of $x_s$ are less than $p_{j_s}^m$ and thus all the prime divisors of $(x_1,\ldots ,x_s)$ are less than $p_{j_s}^m$.
\item If $p\in J$, then substitution (\ref{Subs7}) does not change divisibility of $x_1$ and $x_2$ by $p$. Hence $p\nmid (x_1,\ldots ,x_s)$.
\item If $p\not\in J$, then $p\mid (x_1,\ldots ,x_s)$ before substitution (\ref{Subs7}). Because $p\nmid p_{j_1}p_{j_2}\prod\limits_{q\in J}q$, the substitution (\ref{Subs7}) changes the divisiblity of $x_1$ and $x_2$ by $p$, i.e. $p\nmid x_1$ and $p\nmid x_2$ after substitution (\ref{Subs7}).
\end{itemize}
If the beginning value of $x_2$ is greater than $p_{j_2}^m(\prod_{j=1, j\not\in\{j_s,\ldots ,j_2\}}^{j_1-1}p_j)$, then it remains positive after substitution (\ref{Subs7}). Hence, if $n-p_{j_1}^mp_{j_2}^m\cdots p_{j_s}^m(\prod_{j=1, j\not\in\{j_s,\ldots ,j_2\}}^{j_1-1}p_j)=n-\left(\prod_{j=1}^{j_1}p_j\right)\left(\prod_{i=1}^{s}p_{j_i}^{m-1}\right)\ge sp_{j_1}^mp_{j_2}^m\cdots p_{j_s}^m$, then by Lemma \ref{F1} b) the number $n-\left(\prod_{j=1}^{j_1}p_j\right)\left(\prod_{i=1}^{s}p_{j_i}^{m-1}\right)$ can be written in the form $\sum_{i=1}^s \tilde{x}_i\prod_{h\in [s]\bs\{i\}}p_{j_h}^m$ for some $\tilde{x}_1,\ldots ,\tilde{x}_s\in\N$. Thus, $n$ can be written in the form $\sum_{i=1}^s x_i\prod_{h\in [s]\bs\{i\}}p_{j_h}^m$ for some $x_1,x_3,\ldots ,x_s>0$ and $x_2>p_{j_2}^m(\prod_{j=1, j\not\in\{j_s,\ldots ,j_2\}}^{j_1-1}p_j)$. Hence, applying substitution (\ref{Subs7}), we conclude that $n$ can be written as $\sum_{i=1}^s x_i\prod_{h\in [s]\bs\{i\}}p_{j_h}^m$ for some $x_1,\ldots ,x_s\geq 0$ with $(x_1,\ldots ,x_s)=1$. The equality $(p_{j_1},x_1)=\ldots =(p_{j_s},x_s)=1$ follows directly from the assumption $(n,p_{j_1}\cdots p_{j_s})=1$.\\
At last, let us note that the assumption $n\ge (s+1)\left(\prod_{j=1}^{j_1}p_j\right)\left(\prod_{i=1}^{s}p_{j_i}^{m-1}\right)$ implies $n\ge \left(\prod_{j=1}^{j_1}p_j\right)\left(\prod_{i=1}^{s}p_{j_i}^{m-1}\right)+s\prod_{i=1}^s p_{j_i}^m$.
\end{proof}

\section{Proof of Theorem \ref{Ss}}

We will show that $n=\sum\limits_{k=1}^sa_k\mu_k,$ where for $k<s$ we put
\begin{equation*}
\mu_k=x_k\prod\limits_{\{i,j\}\subset [s]\bs\{k\},\,i\neq j}p_{\{i,j\}}^m
\end{equation*}
and
\begin{equation*}
\mu_s=\prod\limits_{\{i,j\}\subset [s-1],\,i\neq j}p_{\{i,j\}}^m,
\end{equation*}
where $x_k\in\mathbb{N},\,\,\forall_{i,j\in [s-1],\,i\neq j}\,\,p_{\{i,j\}}^m\nmid (x_i,x_j)_m.$ Note that $p_{\{i,j\}}^m\nmid (x_i,x_j)_m\Leftrightarrow p_{\{i,j\}}^m\nmid (x_i,x_j)\Leftrightarrow (p_{\{i,j\}}^m,x_i,x_j)_m=1$. Using set-theoretical notation, we have $p_{\{i,j\}}=p_{\{j,i\}}$.\\
Then, one can see that:
\begin{enumerate}
\item[a)] $p_{\{i,j\}}^m\mid \text{GCD}\{\mu_k:k\in [s]\bs\{i,j\}\}$ for $i,j\in [s],\,i\neq j;$
\item[b)] $p_{\{i,j\}}^m\nmid (\mu_{i},\mu_{j})_m$ for $i,j\in [s-1],\,i\neq j;$
\item[c)] $(\mu_1,\ldots,\mu_s)_m=1.$
\end{enumerate}
Let us consider the expression
$$C(x_1^{(1)},x_2^{(1)},\ldots,x_{s-1}^{(1)})=\sum\limits_{k\in [s-1]}a_kx_k^{(1)}\prod\limits_{\{i,j\}\subset [s]\bs\{k\},\,i\neq j}p_{\{i,j\}}^m,$$
where $x_k^{(1)}\in\mathbb{Z}.$ Assume additionally that
$$x_k^{(1)}>\sum_{l\in [s-1],\,l>k}\frac{a_l}{(a_k,a_l)}\prod_{j\in [s]\bs\{k,l\}}p_{\{j,k\}}^m,$$ 
where $k\in [s-1]$. For $k\in [s-1]$ we put
\begin{equation*}
x_k^{(2)}=x_k^{(1)}+\sum_{l\in [k-1]}\epsilon_{l,k}\frac{a_l}{(a_k,a_l)}\prod_{j\in [s]\bs\{k,l\}}p_{\{j,k\}}^m-\sum_{l\in [s-1],\,l>k}\epsilon_{l,k}\frac{a_l}{(a_k,a_l)}\prod_{j\in [s]\bs\{k,l\}}p_{\{j,k\}}^m,
\end{equation*}
where $\epsilon_{l,k}=\delta(p_{\{l,k\}}^m\mid (x_l^{(1)},x_k^{(1)})_m)$. 
Note that $\epsilon_{l,k}=\epsilon_{k,l}$.
We see that
\begin{align*}
&C(x_1^{(2)},x_2^{(2)},\ldots,x_{s-1}^{(2)})=\sum\limits_{k\in [s-1]}a_kx_k^{(2)}\prod\limits_{\{i,j\}\subset [s]\bs\{k\},\,i\neq j}p_{\{i,j\}}^m\\
&=\sum\limits_{k\in [s-1]}a_k\left(x_k^{(1)}+\sum_{l\in [k-1]}\epsilon_{l,k}\frac{a_l}{(a_k,a_l)}\prod_{j\in [s]\bs\{k,l\}}p_{\{j,k\}}^m-\sum_{l\in [s-1],\,l>k}\epsilon_{l,k}\frac{a_l}{(a_k,a_l)}\prod_{j\in [s]\bs\{k,l\}}p_{\{j,k\}}^m\right)\\
&\quad\cdot\prod\limits_{\{i,j\}\subset [s]\bs\{k\},\,i\neq j}p_{\{i,j\}}^m\\
&=\sum\limits_{k\in [s-1]}a_kx_k^{(1)}\prod\limits_{\{i,j\}\subset [s]\bs\{k\},\,i\neq j}p_{\{i,j\}}^m+\sum\limits_{k\in [s-1]}a_k\sum_{l\in [k-1]}\epsilon_{l,k}\frac{a_l}{(a_k,a_l)}\cdot\frac{1}{p_{\{l,k\}}^m}\prod\limits_{\{i,j\}\subset [s],\,i\neq j}p_{\{i,j\}}^m\\
&\quad -\sum\limits_{k\in [s-1]}a_k\sum_{l\in [s-1],\,l>k}\epsilon_{l,k}\frac{a_l}{(a_k,a_l)}\cdot\frac{1}{p_{\{l,k\}}^m}\prod\limits_{\{i,j\}\subset [s],\,i\neq j}p_{\{i,j\}}^m\\
&=C(x_1^{(1)},x_2^{(1)},\ldots,x_{s-1}^{(1)})+\sum\limits_{k\in [s-1]}\sum_{l\in [k-1]}\epsilon_{l,k}\frac{a_ka_l}{(a_k,a_l)}\cdot\frac{1}{p_{\{l,k\}}^m}\prod\limits_{\{i,j\}\subset [s],\,i\neq j}p_{\{i,j\}}^m\\
&\quad -\sum\limits_{l\in [s-1]}\sum_{k\in [l-1]}\epsilon_{l,k}\frac{a_ka_l}{(a_k,a_l)}\cdot\frac{1}{p_{\{l,k\}}^m}\prod\limits_{\{i,j\}\subset [s],\,i\neq j}p_{\{i,j\}}^m\\
&=C(x_1^{(1)},x_2^{(1)},\ldots,x_{s-1}^{(1)}),
\end{align*}
where we used the equality $\epsilon_{l,k}=\epsilon_{k,l}$.
We proved that if a positive integer can be written in the form
\begin{equation}\label{pewnie}
C(x_1^{(1)},x_2^{(1)},\ldots,x_{s-1}^{(1)})+a_s\prod\limits_{\{i,j\}\subset [s-1],\,i\neq j}p_{\{i,j\}}^m,
\end{equation}
where $x_k^{(1)}\in\mathbb{N},\,\,$ $x_k^{(1)}>\sum_{l\in [s-1],\,l>k}\frac{a_l}{(a_k,a_l)}\prod_{j\in [s]\bs\{k,l\}}p_{\{j,k\}}^m$, $k\in [s-1]$, then it can be written in the form 
$$C(x_1^{(2)},x_2^{(2)},\ldots,x_{s-1}^{(2)})+a_s\prod\limits_{\{i,j\}\subset [s-1],\,i\neq j}p_{\{i,j\}}^m,$$ where  
$x_k^{(2)}\in\mathbb{N},\,\,\forall_{j\in [s-1]\bs\{k\}}\,\,p_{\{j,k\}}^m\nmid (x_j^{(2)},x_k^{(2)})_m.$\\
It remains to give a value $n_0$ such that  each positive integer $n\ge n_0$ can be written as (\ref{pewnie})
where $x_k^{(1)}\in\mathbb{N},\,\,$ $x_k>\sum_{l\in [s-1],\,l>k}\frac{a_l}{(a_k,a_l)}\prod_{j\in [s]\bs\{k,l\}}p_{\{j,k\}}^m.$

We put $x_k^{(1)}=x_k^{(3)}+\sum_{l\in [s-1],\,l>k}\frac{a_l}{(a_k,a_l)}\prod_{j\in [s]\bs\{k,l\}}p_{\{j,k\}}^m,$ where $x_k^{(3)}>0,\,k\in [s-1]$.\\
It remains to give a value $n_0$ such that  each positive integer $n\ge n_0$ can be written as 
$$C(x_1^{(3)},x_2^{(3)},\ldots,x_{s-1}^{(3)})+a_s\prod\limits_{\{i,j\}\subset [s-1],\,i\neq j}p_{\{i,j\}}^m+\sum_{k=1}^{s-2}\left(a_k\prod_{\{i,j\}\subset [s]\bs\{k\},\,i\neq j}p_{\{i,j\}}^m\right)\cdot\left(\sum_{l\in [s-1],\,l>k}\frac{a_l}{(a_k,a_l)}\prod_{j\in [s]\bs\{k,l\}}p_{\{j,k\}}^m\right),$$ where $x_k^{(3)}\in\mathbb{N},\,k\in [s-1]$, i.e.:
\begin{align*}
C(x_1^{(3)},x_2^{(3)},\ldots,x_{s-1}^{(3)})+a_s\prod\limits_{\{i,j\}\subset [s-1],\,i\neq j}p_{\{i,j\}}^m+\sum_{k=1}^{s-2}\left(\prod_{\{i,j\}\subset [s],\,i\neq j}p_{\{i,j\}}^m\right)\cdot\left(\sum_{l\in [s-1],\,l>k}\frac{a_ka_l}{(a_k,a_l)}\cdot\frac{1}{p_{\{l,k\}}^m}\right).
\end{align*}

By Lemma \ref{F1}, b) we may take:
$$n_0\ge \sum\limits_{k\in [s-1]}\frac{b_{s-1} b_k}{(b_{s-1},b_k)}+a_s\prod\limits_{\{i,j\}\subset [s-1],\,i\neq j}p_{\{i,j\}}^m+\sum_{k=1}^{s-2}\left(\prod_{\{i,j\}\subset [s],\,i\neq j}p_{\{i,j\}}^m\right)\cdot\left(\sum_{l\in [s-1],\,l>k}\frac{a_ka_l}{(a_k,a_l)}\cdot\frac{1}{p_{\{l,k\}}^m}\right),$$
where $b_k=a_k\prod_{\{i,j\}\subset [s]\bs\{k\},\,i\neq j}p_{\{i,j\}}^m.$ We can apply Lemma \ref{F1} to $b_k$, $k\in [s-1]$, as $(b_1,b_2,\ldots,b_{s-1})=1$ follows from the assumptions $p_{\{i,j\}}\nmid (a_i,a_j)$ for each $i,j\in [s-1],\,\,i\neq j,$ and $p_{\{i,s\}}\nmid a_i$ for each $i\in [s-1]$.
But $\frac{b_{s-1}b_k}{(b_{s-1},b_k)}\le\frac{a_{s-1} a_k}{(a_{s-1},a_k)}\cdot\frac{\prod\limits_{\{i,j\}\subset [s],\, i\neq j}p_{\{i,j\}}^m}{p_{\{k,s-1\}}^m}$ for $k\in [s-2]$, hence we may put
\begin{align*}
n_0=&\sum\limits_{k\in [s-2]}\frac{a_{s-1} a_k}{(a_{s-1},a_k)}\cdot\frac{\prod\limits_{\{i,j\}\subset [s],\, i\neq j}p_{\{i,j\}}^m}{p_{\{k,s-1\}}^m}+a_{s-1}\prod\limits_{\{i,j\}\subset [s]\bs\{s-1\},\,i\neq j}p_{\{i,j\}}^m+a_s\prod\limits_{\{i,j\}\subset [s-1],\,i\neq j}p_{\{i,j\}}^m\\
&+\sum_{k=1}^{s-2}\left(\prod_{\{i,j\}\subset [s],\,i\neq j}p_{\{i,j\}}^m\right)\cdot\left(\sum_{l\in [s-1],\,l>k}\frac{a_ka_l}{(a_k,a_l)}\cdot\frac{1}{p_{\{l,k\}}^m}\right)\\
=&a_{s-1}\prod\limits_{\{i,j\}\subset [s]\bs\{s-1\},\,i\neq j}p_{\{i,j\}}^m+a_s\prod\limits_{\{i,j\}\subset [s-1],\,i\neq j}p_{\{i,j\}}^m\\
&+\left(\prod_{\{i,j\}\subset [s],\,i\neq j}p_{\{i,j\}}^m\right)\sum_{k=1}^{s-2}\left(\frac{a_{s-1}a_k}{(a_{s-1},a_k)}\cdot\frac{2}{p_{\{s-1,k\}}^m}+\sum_{l\in [s-2],\,l>k}\frac{a_ka_l}{(a_k,a_l)}\cdot\frac{1}{p_{\{l,k\}}^m}\right).
\end{align*}
This means that $n\ge n_0$ can be represented as $n=\sum\limits_{k=1}^sa_k\mu_k,$ where
\begin{equation*}
\mu_k=x_k\prod\limits_{\{i,j\}\subset [s]\bs\{k\},\,i\neq j}p_{\{i,j\}}^m,
\end{equation*}
\begin{equation*}
\mu_s=\prod\limits_{\{i,j\}\subset [s-1],\,i\neq j}p_{\{i,j\}}^m
\end{equation*}
and $x_k\in\mathbb{N},\,\,\forall_{i,j\in [s-1],\,i\neq j}\,\,p_{\{i,j\}}^m\nmid (x_i,x_j).$ In particular, $n\ge n_0$ belongs to $S_{s,t}^m(a_1,a_2,\ldots,a_s)$ for $t\in [s-2]$ as $S_{s,t}^m(a_1,a_2,\ldots,a_s)\supseteq S_{s,s-2}^m(a_1,a_2,\ldots,a_s)$. This ends the proof.

\section{Proof of Theorem \ref{S3}}
Let
\begin{align*}
j:=&\min\{i\in\N: p_i\nmid n\},\\ k:=&\min\{i\in\N: p_i\nmid n\}\bs\{j\},\\ l:=&\min\{i\in\N: p_i\nmid n\}\bs\{j,k\}
\end{align*}
and (recall that $p_j=1$ for $j\leq 0$)
\begin{align*}
l':=&\max\{i\in\Z: p_i\mid n\},\\ k':=&\max\{i\in\Z: p_i\mid n\}\bs\{l'\},\\ j':=&\max\{i\in\Z: p_i\mid n\}\bs\{l',k'\}.
\end{align*}
By a careful analysis case by case we can see that $l'\geq l-3$, $k'\geq k-3$ and $j'\geq j-3$.\\
Moreover, $n\geq\prod_{i=1, i\not\in\{j,k,l\}}^{l'}p_i$.

If $j\geq 10$, then $l'>k'>j'\geq j-3\geq 7$ and by Lemma \ref{F3} we have $2p_{j'}>p_j$, $2p_{k'}>p_k$ and $2p_{l'}>p_l$. Hence,
\begin{align*}
n &\ge \prod_{i=1, i\not\in\{j,k,l\}}^{l'}p_i=\prod_{i=1}^{j-6}p_i\cdot p_{j-5}p_{j-4}\cdot\prod_{i=j-3, i\not\in\{j,k,l\}}^{l'}p_i\ge\prod_{i=1}^{j-6}p_i\cdot p_5p_6p_{j'}p_{k'}p_{l'}=\prod_{i=1}^{j-6}p_i\cdot 143p_{j'}p_{k'}p_{l'}\\
&>9\prod_{i=1}^{j-6}p_i\cdot 2p_{j'}\cdot 2p_{k'}\cdot 2p_{l'}>9\prod_{i=1}^{j-6}p_i\cdot p_jp_kp_l>\left(8\prod_{i=1}^{j-6}p_i+6\right)p_jp_kp_l.
\end{align*}
By Lemma \ref{F2}, the number $n$ can be written in the form $p_jp_kx+p_jp_ly+p_kp_lz$, where $x,y,z\in\N$ are such that $(x,y,z)=1$.

Let $j\in\{8,9\}$ and $l-j\geq 3$. Then $l'\geq l-2$, $k'\geq k-3$ and $j'\geq j-2\geq 6$. By Lemma \ref{F3} we have $2p_{j'}>p_j$, $2p_{k'}>p_k$ and $2p_{l'}>p_l$. Hence,
\begin{align*}
n & \geq\prod_{i=1}^{j-5}p_i\cdot p_{j-4}p_{j-3}\cdot\prod_{i=j-2, i\not\in\{j,k,l\}}^{l'}p_i\geq\prod_{i=1}^{j-5}p_i\cdot p_4p_5p_{j'}p_{k'}p_{l'}=\prod_{i=1}^{j-5}p_i\cdot 77p_{j'}p_{k'}p_{l'}>7\prod_{i=1}^{j-5}p_i\cdot 2p_{j'}\cdot 2p_{k'}\cdot 2p_{l'}\\
& >7\prod_{i=1}^{j-5}p_i\cdot p_jp_kp_l>\left(6\prod_{i=1}^{j-5}p_i+6\right)p_jp_kp_l.
\end{align*}
By Lemma \ref{F2}, the number $n$ can be written in the form $p_jp_kx+p_jp_ly+p_kp_lz$, where $x,y,z\in\N$ are such that $(x,y,z)=1$.

If $j\in\{8,9\}$ and $l-j=2$, i. e. $k=j+1\leq 10$ and $l=j+2\leq 11$, then by Lemma \ref{F2}, each integer $$n\geq\left(6\prod_{i=1}^4p_i+6\right)p_9p_{10}p_{11}=26 177 082\geq\left(6\prod_{i=1}^{j-5}p_i+6\right)p_jp_{j+1}p_{j+2}$$
can be written in the form $p_jp_kx+p_jp_ly+p_kp_lz$, where $x,y,z\in\N$ are such that $(x,y,z)=1$.

Let $j\in\{6,7\}$ and $l-j\geq 4$. Then $l'\geq l-2$, $k'\geq k-2$ and $j'\geq j-1\geq 5$. By Lemma \ref{F3} we have $2p_{k'}>p_k$ and $2p_{l'}>p_l$. Moreover, we easily check that $\frac{7}{4}p_{j'}\geq \frac{7}{4}p_{j-1}>p_j$. Hence,
\begin{align*}
n & \geq\prod_{i=1}^{j-4}p_i\cdot p_{j-3}p_{j-2}\cdot\prod_{i=j-1, i\not\in\{j,k,l\}}^{l'}p_i\geq\prod_{i=1}^{j-4}p_i\cdot p_3p_4p_{j'}p_{k'}p_{l'}=\prod_{i=1}^{j-4}p_i\cdot 35p_{j'}p_{k'}p_{l'}=5\prod_{i=1}^{j-4}p_i\cdot \frac{7}{4}p_{j'}\cdot 2p_{k'}\cdot 2p_{l'}\\
& >5\prod_{i=1}^{j-4}p_i\cdot p_jp_kp_l\geq\left(4\prod_{i=1}^{j-5}p_i+6\right)p_jp_kp_l.
\end{align*}
By Lemma \ref{F2}, the number $n$ can be written in the form $p_jp_kx+p_jp_ly+p_kp_lz$, where $x,y,z\in\N$ are such that $(x,y,z)=1$.

If $j\in\{6,7\}$ and $l-j\leq 3$, i. e. $k\leq j+2\leq 9$ and $l\leq j+3\leq 10$, then by Lemma \ref{F2}, each integer $$n\geq\left(4\prod_{i=1}^3p_i+6\right)p_7p_9p_{10}=1428714\geq\left(4\prod_{i=1}^{j-4}p_i+6\right)p_jp_kp_l$$
can be written in the form $p_jp_kx+p_jp_ly+p_kp_lz$, where $x,y,z\in\N$ are such that $(x,y,z)=1$.

Let $j=5$ and $l-j\geq 5$. Then $l'\geq l-2$, $k'\geq l-3\geq j+2=7$ and $j'>j$. By Lemma \ref{F3} we have $2p_{k'}>p_k$ and $2p_{l'}>p_l$. Hence,
\begin{align*}
n & \geq p_1p_2p_3p_4\cdot\prod_{i=6, i\not\in\{k,l\}}^{l'}p_i\geq p_1p_2\cdot 35\cdot p_{j'}p_{k'}p_{l'}>4p_1p_2\cdot p_j\cdot 2p_{k'}\cdot 2p_{l'}>4p_1p_2\cdot p_jp_kp_l=(3p_1p_2+6)p_jp_kp_l.
\end{align*}
By Lemma \ref{F2}, the number $n$ can be written in the form $p_jp_kx+p_jp_ly+p_kp_lz$, where $x,y,z\in\N$ are such that $(x,y,z)=1$.

If $j=5$ and $l-j\leq 4$, i. e. $k\leq j+3=8$ and $l\leq j+4=9$, then by Lemma \ref{F2}, each integer $$n\geq\left(3p_1p_2+6\right)p_5p_8p_9=115368\geq\left(3p_1p_2+6\right)p_jp_kp_l$$
can be written in the form $p_jp_kx+p_jp_ly+p_kp_lz$, where $x,y,z\in\N$ are such that $(x,y,z)=1$.

Let $j=4$ and $l-j\geq 6$. Then $l'\geq l-2$, $k'\geq l-3\geq j+3=7$ and $j'\geq j+2=6$. By Lemma \ref{F3} we have $2p_{k'}>p_k$ and $2p_{l'}>p_l$. Hence,
\begin{align*}
n & \geq p_1p_2p_3p_5\cdot\prod_{i=6, i\not\in\{k,l\}}^{l'}p_i\geq p_1p_2\cdot 55\cdot p_{j'}p_{k'}p_{l'}>3p_1p_2\cdot p_j\cdot 2p_{k'}\cdot 2p_{l'}>3p_1p_2\cdot p_jp_kp_l=(2p_1p_2+6)p_jp_kp_l.
\end{align*}
By Lemma \ref{F2}, the number $n$ can be written in the form $p_jp_kx+p_jp_ly+p_kp_lz$, where $x,y,z\in\N$ are such that $(x,y,z)=1$.

If $j=4$ and $l-j\leq 5$, i. e. $k\leq j+4=8$ and $l\leq j+5=9$, then by Lemma \ref{F2}, each integer $$n\geq\left(2p_1p_2+6\right)p_4p_8p_9=55 062\geq\left(2p_1p_2+6\right)p_jp_kp_l$$
can be written in the form $p_jp_kx+p_jp_ly+p_kp_lz$, where $x,y,z\in\N$ are such that $(x,y,z)=1$.

Let $j=3$ and $l-j\geq 6$. Then $l'\geq l-2$, $k'\geq l-3\geq j+3=6$ and $j'\geq j+2=5$. By Lemma \ref{F3} we have $2p_{k'}>p_k$ and $2p_{l'}>p_l$. Hence,
\begin{align*}
n & \geq p_1p_2p_4\cdot\prod_{i=5, i\not\in\{k,l\}}^{l'}p_i\geq 6\cdot 7\cdot p_{j'}p_{k'}p_{l'}>6p_j\cdot 2p_{k'}\cdot 2p_{l'}>6p_jp_kp_l.
\end{align*}
By Lemma \ref{F2}, the number $n$ can be written in the form $p_jp_kx+p_jp_ly+p_kp_lz$, where $x,y,z\in\N$ are such that $(x,y,z)=1$.

If $j=3$ and $l-j\leq 5$, i. e. $k\leq j+4=7$ and $l\leq j+5=8$, then by Lemma \ref{F2}, each integer $$n\geq 6p_3p_7p_8=9690\geq 6p_jp_kp_l$$
can be written in the form $p_jp_kx+p_jp_ly+p_kp_lz$, where $x,y,z\in\N$ are such that $(x,y,z)=1$.

Let $j=2$ and $l-j\geq 7$. Then $l'\geq l-2$, $k'\geq l-3\geq j+4=6$ and $j'\geq j+3=5$. By Lemma \ref{F3} we have $2p_{k'}>p_k$ and $2p_{l'}>p_l$. Hence,
\begin{align*}
n & \geq p_1p_3p_4\cdot\prod_{i=5, i\not\in\{k,l\}}^{l'}p_i\geq 70p_{j'}p_{k'}p_{l'}>4p_j\cdot 2p_{k'}\cdot 2p_{l'}>4p_jp_kp_l.
\end{align*}
By Lemma \ref{F2}, the number $n$ can be written in the form $p_jp_kx+p_jp_ly+p_kp_lz$, where $x,y,z\in\N$ are such that $(x,y,z)=1$.

If $j=2$ and $l-j\leq 6$, i. e. $k\leq j+5=7$ and $l\leq j+6=8$, then by Lemma \ref{F2}, each integer $$n\geq 4p_2p_7p_8=3876\geq 4p_jp_kp_l$$
can be written in the form $p_jp_kx+p_jp_ly+p_kp_lz$, where $x,y,z\in\N$ are such that $(x,y,z)=1$.

Let $j=1$ and $l-j\geq 7$. Then $l'\geq l-2$, $k'\geq k-3\geq j+4=5$ and $j'\geq j+3=4$. By Lemma \ref{F3} we have $2p_{k'}>p_k$ and $2p_{l'}>p_l$. Hence,
\begin{align*}
n & \geq p_2p_3\cdot\prod_{i=4, i\not\in\{k,l\}}^{l'}p_i\geq 3\cdot 5\cdot p_{j'}p_{k'}p_{l'}>3p_s\cdot 2p_{k'}\cdot 2p_{l'}>3p_jp_kp_l.
\end{align*}
By Lemma \ref{F2}, the number $n$ can be written in the form $p_jp_kx+p_jp_ly+p_kp_lz$, where $x,y,z\in\N$ are such that $(x,y,z)=1$.

If $j=1$ and $l-j\leq 6$, i. e. $k\leq j+5=6$ and $l\leq j+6=7$, then by Lemma \ref{F2}, each integer $$n\geq 3p_1p_6p_7=1326\geq 3p_jp_kp_l$$
can be written in the form $p_jp_kx+p_jp_ly+p_kp_lz$, where $x,y,z\in\N$ are such that $(x,y,z)=1$.

Summing up, if $j\in\N$ is arbitrary and $n\geq 26 177 082$, then $n=p_jp_kx+p_jp_ly+p_kp_lz$ for some $x,y,z\in\N$ such that $(x,y,z)=1$. Thus $n\in\cal{S}_3$.

\section{Proof of Theorem \ref{Sss-1}}

Let
\begin{align*}
j_s:=&\min\{j\in\N: p_j\nmid n\},\\ j_i:=&\min\{j\in\N: p_j\nmid n\}\bs\{j_s,\ldots ,j_{i+1}\},\ 1\leq i\leq s-1
\end{align*}
and (recall that $p_j=1$ for $j\leq 0$)
\begin{align*}
j'_1:=&\max\{j\in\Z: p_j\mid n\},\\ j'_i:=&\max\{j\in\Z: p_j\mid n\}\bs\{j'_1,\ldots ,j'_{i-1}\},\ 2\leq i\leq ms+1.
\end{align*}
We can see that $j'_i\geq j_1-s-i+1$ for $i\in\{1,\ldots ,ms+1\}$ and $j_i\leq j_1-i+1$ for $i\in\{1,\ldots ,s\}$. In particular, $j'_{us+i}\geq j_i-(u+1)s$ for $i\in\{1,\ldots ,s\}$ and $u\in\{0,\ldots ,m-1\}$. Moreover, $$n\geq\left(\prod_{j=1, j\not\in\{j_1,\ldots ,j_s\}}^{j'_{ms+1}}p_j\right)\left(\prod_{i=1}^{ms} p_{j'_i}\right).$$

First, let us consider the case when $j_1\geq c_{s,m}$. Then,
\begin{align*}
n &\ge \left(\prod_{j=1, j\not\in\{j_s,\ldots ,j_1\}}^{j'_{ms+1}-1}p_j\right)\cdot p_{j'_{ms+1}}\cdot \left(\prod_{i=1}^{ms} p_{j'_i}\right)\ge \left(\prod_{j=1, j\not\in\{j_s,\ldots ,j_1\}}^{j'_{ms+1}-1}p_j\right)\cdot p_{j_1+(m+1)s}\cdot \left(\prod_{i=1}^{ms} p_{j'_i}\right)\\
& >\left(\prod_{j=1, j\not\in\{j_s,\ldots ,j_1\}}^{j_1-(m+1)s-1}p_j\right)\cdot 2^{(2m+1)s+1}\cdot \left(\prod_{i=1}^s\prod_{u=1}^m p_{j_i-us}\right)> \left(2^{(m+1)s+1}\prod_{j=1, j\not\in\{j_s,\ldots ,j_1\}}^{j_1-(m+1)s-1}p_j\right)\left(\prod_{i=1}^s\prod_{u=1}^m 2p_{j_i-us}\right)\\
& >\left(s+1+2^{(m+1)s}\prod_{j=1, j\not\in\{j_s,\ldots ,j_1\}}^{j_1-(m+1)s-1}p_j\right)\left(\prod_{i=1}^s p_{j_i}^m\right).
\end{align*}
By Lemma \ref{F4}, the number $n$ can be written in the form $\sum_{i=1}^s x_i\prod_{h\in [s]\bs\{i\}}p_{j_h}^m$ with positive integers $x_1,\ldots ,x_s$ such that $(x_1,\ldots ,x_s)_m=1$, $(p_{j_1},x_1)=\ldots =(p_{j_s},x_s)=1$.

We are left with the case of $j_1<c_{s,m}$. Let $n\ge (s+1)\left(\prod_{j=1}^{c_{s,m}-1}p_j\right)\left(\prod_{i=1}^{s}p_{c_{s,m}-i}^{m-1}\right)$. Since
\begin{align*}
n &\ge (s+1)\left(\prod_{j=1}^{c_{s,m}-1}p_j\right)\left(\prod_{i=1}^{s}p_{c_{s,m}-i}^{m-1}\right)\ge (s+1)\left(\prod_{j=1}^{j_1}p_j\right)\left(\prod_{i=1}^{s}p_{j_i}^{m-1}\right),
\end{align*}
by Lemma \ref{F4} the number $n$ can be written in the form $\sum_{i=1}^s x_i\prod_{h\in [s]\bs\{i\}}p_{j_h}^m$ with positive integers $x_1,\ldots ,x_s$ such that $(x_1,\ldots ,x_s)_m=1$, $(p_{j_1},x_1)=\ldots =(p_{j_s},x_s)=1$.

\section{Further study}

We are left with some open problems. The first one is whether the assumption $(a_1,a_2,\ldots,a_{s-1})=1$ can be relaxed to $(a_1,a_2,\ldots,a_s)=1$. We suppose that the answer on this question is affirmative.

\begin{conj}
Let $s\ge 3.$ Let $a_k$, where $k\in [s]$, be positive integers such that $(a_1,a_2,\ldots,a_s)=1$. Then there exists a computable constant $c=c(a_1,a_2,\ldots,a_s)$ such that every positive integer at least equal to $c$ belongs to the set $S_{s,s-2}^m(a_1,a_2,\ldots,a_s)$.
\end{conj}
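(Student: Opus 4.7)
My plan is to adapt the proof of Theorem~\ref{Ss} so that all $s$ of the $\mu_k$ are treated on an equal footing, rather than reserving $\mu_s$ to be a fixed constant. Concretely, I would choose pairwise distinct primes $p_{\{i,j\}}$, $\{i,j\}\subset[s]$, $i\ne j$, all coprime to $a_1a_2\cdots a_s$, and set
\[
M_k:=\prod_{\{i,j\}\subset[s]\bs\{k\},\,i\neq j}p_{\{i,j\}}^m,\qquad b_k:=a_kM_k,\qquad \mu_k:=x_kM_k,
\]
with positive integers $x_k$ to be determined. The key new input is that $(a_1,\dots,a_s)=1$ is exactly what forces $(b_1,\dots,b_s)=1$: for a prime $q$ outside the $p_{\{i,j\}}$-system one uses $(a_1,\dots,a_s)=1$ directly, while for $q=p_{\{i,j\}}$ one uses $q\nmid M_i$, $q\nmid M_j$, and the coprimality of $p_{\{i,j\}}$ with $(a_i,a_j)$.

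With this setup, for $n$ above a computable threshold Lemma~\ref{F1}(b) applied to $b_1,\dots,b_s$ yields positive integers $x_1,\dots,x_s$ with $n=\sum b_kx_k$ and with any required lower bounds. I would then perform, for every pair $\{i,j\}\subset[s]$, the cancellation move
\[
x_i\mapsto x_i+\epsilon_{i,j}\tfrac{a_j}{(a_i,a_j)}\prod_{k\in[s]\bs\{i,j\}}p_{\{i,k\}}^m,\qquad x_j\mapsto x_j-\epsilon_{i,j}\tfrac{a_i}{(a_i,a_j)}\prod_{k\in[s]\bs\{i,j\}}p_{\{j,k\}}^m,
\]
with $\epsilon_{i,j}=\epsilon_{j,i}=\delta(p_{\{i,j\}}^m\mid(x_i,x_j)_m)$. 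The same telescoping identity used in Section~4 shows these moves preserve $\sum b_kx_k$ and produce $p_{\{i,j\}}^m\nmid(x_i,x_j)_m$ for every pair $\{i,j\}\subset[s]$. Since $p_{\{i,j\}}^m\mid M_k$ for every $k\notin\{i,j\}$, this yields $p_{\{i,j\}}^m\mid\mu_k$ for every $k\notin\{i,j\}$, so each $(s-2)$-subset of $\{\mu_1,\dots,\mu_s\}$ inherits the common $m$-th power divisor $p_{\{i,j\}}^m$, delivering the $(s-2)$-wise part of membership in $S^m_{s,s-2}(a_1,\dots,a_s)$ automatically.

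The main obstacle is the verification of $(\mu_1,\dots,\mu_s)_m=1$. In Theorem~\ref{Ss} this came for free because $\mu_s$ was a fixed product of $p_{\{i,j\}}$-primes only; here $v_q(\mu_s)=v_q(x_s)$ for every outside prime $q$, so a common $m$-th power divisor $q^m$ of $(x_1,\dots,x_s)$ would spoil the conclusion. I would handle this by a final rectification round: any such bad prime must satisfy $q^m\mid x_k$ for all $k$, hence $q^m\mid\sum b_kx_k=n$, so $q\le n^{1/m}$ and only $O(\log n)$ bad primes occur. For each I would apply one parameterized move $(x_1,x_2)\mapsto\bigl(x_1+\tfrac{b_2}{(b_1,b_2)}t,\,x_2-\tfrac{b_1}{(b_1,b_2)}t\bigr)$ with $t$ selected by the Chinese Remainder Theorem so as simultaneously to break $q^m\mid x_1$ and to preserve every previously attained $p_{\{i',j'\}}^m\nmid(x_{i'},x_{j'})_m$ condition; this is solvable because the moduli involved are pairwise coprime, and $\bigl(\tfrac{b_1}{(b_1,b_2)},\tfrac{b_2}{(b_1,b_2)}\bigr)=1$ (together with the freedom to choose pairs $(1,k)$ whenever the default pair $(1,2)$ is blocked by $q\mid\tfrac{a_2}{(a_1,a_2)}$, which always succeeds since $(a_1,\dots,a_s)=1$ provides an index $k$ with $q\nmid a_k$) guarantees each individual residue restriction is satisfiable. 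Tracking the cumulative displacement of the $x_k$ so that all remain positive throughout the Lemma~\ref{F1} output, the pair-moves, and the rectification is the bookkeeping that fixes the computable constant $c(a_1,\dots,a_s)$.
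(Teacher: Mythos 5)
Note first that this statement is posed in the paper as an open conjecture, so there is no proof of record to compare against; your proposal has to stand on its own, and as written it does not close. The symmetrized setup is sound: $(a_1,\ldots,a_s)=1$ does force $(b_1,\ldots,b_s)=1$, Lemma \ref{F1}(b) produces the $x_k$ with any prescribed constant lower bounds, and the pair-moves (the same telescoping as in Section 4) secure $p_{\{i,j\}}^m\nmid(x_i,x_j)_m$ for every pair while preserving $\sum b_kx_k$, which gives the $(s-2)$-wise divisor conditions. You have also correctly isolated the one genuinely new difficulty, namely killing $m$-th power divisors of $(x_1,\ldots,x_s)$ supported on primes outside the $p_{\{i,j\}}$-system. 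But the rectification you sketch for that step is not a proof, and it is exactly this step that separates the conjecture from Theorem \ref{Ss}.

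Three concrete problems. (i) You always try to break $q^m\mid x_1$, but this can be impossible: take $s=3$, $a_1=1$, $a_2=a_3=q^m$ and $q^m\mid n$; then $b_1x_1\equiv n\equiv 0\pmod{q^m}$ with $q\nmid b_1$ forces $q^m\mid x_1$ in every representation, and your fallback of choosing a pair $(1,k)$ with $q\nmid a_k$ fails because the only such index is $k=1$. The targeted coordinate must be allowed to depend on $q$. (ii) Your CRT conditions for the move handling $q$ only preserve the $p_{\{i',j'\}}$-conditions; nothing prevents that move from re-creating $q'^m\mid(x_1,\ldots,x_s)$ for a bad prime $q'$ treated earlier, since all your moves keep displacing $x_1$ (this genuinely happens, e.g.\ when $q'^m\mid \tfrac{b_1}{(b_1,b_k)}$). (iii) Once you add the congruences needed to exclude that interference, the CRT modulus contains $\prod_{q\ \mathrm{bad}}q$, which can be of order $n^{1/m}$ (of order $n$ when $m=1$), so the smallest admissible $t$ --- hence the displacement of the $x_k$ --- is no longer bounded in terms of $a_1,\ldots,a_s$ alone, and positivity together with the computability of $c$ does not follow from ``bookkeeping''. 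A union bound over a small box of $t$'s does not rescue this, because $\sum_{q\ \mathrm{bad}}1/q$ can exceed $1$ (the bad primes can be $2,3,5,7,11,13$). The paper's own device for the analogous difficulty in the special case $a_1=\cdots=a_s=1$ (Lemma \ref{F4}, used in Theorem \ref{Sss-1}) is quite different: choose the auxiliary primes coprime to $n$, cap one coordinate below $p^m$ so that only an explicitly bounded set of primes can divide the gcd, flip all of them with a single substitution, and then compare that single large displacement $\prod p$ against $n$ itself. Producing an argument of that strength for general $a_1,\ldots,a_s$ is the missing core of the proof, not a routine verification.
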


Another problem concerns stems of the form $S_{s,s-1}^m(a_1,a_2,\ldots,a_s)$, where $s\ge 3$.

\begin{ques}
Let $s\ge 3.$ Let $a_k$, where $k\in [s]$, be positive integers such that $(a_1,a_2,\ldots,a_s)=1$. Is it true that every sufficiently large positive integer belongs to the set $S_{s,s-1}^m(a_1,a_2,\ldots,a_s)$? If yes, what is the constant $c=c(a_1,a_2,\ldots,a_s)$ such that every positive integer at least equal to $c$ belongs to the set $S_{s,s-1}^m(a_1,a_2,\ldots,a_s)$?
\end{ques}

The next step in the study in the area of Frobenius problem with restrictions could be considering the following more general sets:
\begin{align*}
\cal{T}_{s,k}^m(a_1,a_2,\ldots a_s)= & \left\{\sum_{i=1}^sa_i\mu_i: \mu_1,\ldots ,\mu_s\in\N, (\mu_{i_1},\ldots ,\mu_{i_k})_m>1\mbox{ for any }1\leq i_1<\ldots <i_k\leq s\right.\\
& \left.\mbox{ and }(\mu_{i_1},\ldots ,\mu_{i_{k+1}})_m=1\mbox{ for any }1\leq i_1<\ldots <i_{k+1}\leq s\right\},
\end{align*}
where $s,k\in\N$, $s\geq 3$ and $2\leq k\leq s-1$. Note that $\cal{T}_{s,s-1}^m(a_1,a_2,\ldots a_s)=\cal{S}_{s,s-1}^m(a_1,a_2,\ldots a_s)$.

Another possible direction of further study is translating the results from the paper for the case of polynomials of single variable over a field, see \cite{EK}. If $K$ is a field, then the ring $K[t]$ of polynomials of variable $t$ over $K$ is a~Euclidean domain, just as the ring $\Z$ of integers. Dedekind and Weber in \cite{DeWe} proposed considering the problem of representation monic polynomials in $K[t]$ as linear combination of given monic polynomials $A_1,A_2,\ldots,A_s\in K[t]$ with monic coefficients $x_1,x_2,\ldots,x_s\in K[t]$. Concei\c{c}\~{a}o, Gondim and Rodriguez in \cite{CoGoRo} give a short justification of monic polynomials playing the role of positive integers. Namely, every nonzero element of $K[t]$ can be written uniquely as a product of a monic polynomial and a unit in $K[t]$ exactly as every integer can be presented uniquely as a product of a positive integer and a unit in $\Z$. Moreover, they showed that for every monic polynomials $A_1,A_2,\ldots,A_s\in K[t]$ there exists a~constant $g=g(A_1,A_2,\ldots,A_s)$ such that each monic polynomial in $K[t]$ of degree at least $g$ can written as $A_1x_1+A_2x_2+\ldots +A_sx_s$ for some monic $x_1,x_2,\ldots,x_s\in K[t]$. Our results can be extended for the case of the ring $K[t]$ by considering the sets
\begin{align*}
\cal{S}_{s,t}^m(A_1,A_2,\ldots A_s)= & \left\{\sum_{i=1}^sA_ix_i: x_1,\ldots ,x_s\in K[t]\text{ are monic}, (x_{i_1},\ldots ,x_{i_k})_m>1\mbox{ for any }1\leq i_1<\ldots <i_k\leq s\right.\\
& \left.\mbox{ and }(x_1,\ldots ,x_s)_m=1\right\},
\end{align*}
where $m\in\N$, $A_1,A_2,\ldots A_s\in K[t]$ are monic polynomials and $(x_1,\ldots ,x_s)_m$ denotes the common monic divisor of $x_1,\ldots ,x_s$ of greatest degree being an $m$-th power of an element of $K[t]$. The aim would be giving a~constant $c=c(m;A_1,A_2,\ldots A_s)$ such that every monic polynomial in $K[t]$ of degree at least $c$ belongs to the set $\cal{S}_{s,t}^m(A_1,A_2,\ldots A_s)$.

\vskip 1cm


\begin{thebibliography}{100}
\bibitem{RA} J.L.R. Alfonsin, \textit{The diophantine Frobenius problem}, Oxford Univ, Press. 2009.
\bibitem{ECoh} E. Cohen, \textit{An extension of Ramanujan's sum}, Duke Math. J. vol. 16 (1949), no. 1, 85--90.
\bibitem{CoGoRo} R. Concei\c{c}\~{a}o, R. Gondim, M. Rodriguez, \textit{On a Frobenius Problem For Polynomials}, Rocky Mountain J. Math. vol. 47 (2017), no. 5, 1427--1462.
\bibitem{DeWe} R. Dedekind and H. Weber, \textit{Theorie der algebraischen Funktionen einer Ver\"{a}ndlichen}, J. Reine Angew. Math. XCII (1882), 181--290.
\bibitem{RKG} R.K. Guy, \textit{Unsolved problems in number theory}, Springer-Verlag, 1981.
\bibitem{HR} H. Halberstam and K.F. Roth, \textit{Sequences, vol. 1}, Clarendon Press, 1966.

\bibitem{EK} E. Kowalski, \textit{Binary additive problems for polynomials over finite fields}, Survey, Bourbaki seminar, 2022. 
\url{https://people.math.ethz.ch/~kowalski/sawin-shusterman.pdf}.
\bibitem{MiMuSa} P. Miska, N. Murru, C. Sanna, \textit{On the $p$-adic denseness of the quotient set of a polynomial image}, J. Number Theory vol. 197 (2019), 218--227.
\bibitem{KVN} K. Vishnu Namboothiri, \textit{On the number of solutions of a restricted linear congruence}, J. Number Theory, vol. 188 (2018), 324--334.
\bibitem{MBN} M.B. Nathanson, \textit{Elementary Methods in Number Theory}, Graduate Texts of Mathematics  vol. 195, Springer-Verlag, New York, 2000.
\bibitem{LIN} Yu. V. Linnik, \textit{Elementare L\"osung des Waringschen Problems mit der Methode von Schnirelmann}, Mat.Sb.N.S. 12 (54) (1943) 225--230. 
\bibitem{Ram} S. Ramanujan, \textit{A proof of Bertrand's postulate}, J. Indian Math. Soc., vol. 11 (1919), 181--182.
\end{thebibliography}
\end{document}